\documentclass[12pt,leqno]{amsart} 

\def\cal{\mathcal}

\def\int{\mathbb{Z}}

\def\Ueg{{\cal U}_{\varepsilon}(G)}

\def\g{\mathfrak g}
\def\Inn{\text{Inn}~}
\def\Ker{\text{Ker}~}
\def\Oc{{\cal O}}
 
\def\ll{{\mathfrak l}}
\def\h{{\mathfrak h}}

\def\p{{\mathfrak p}}
\def\pp{{\mathfrak p}}

\def\id{{\rm id}}

\def\spec{{\rm Spec}}

\def\ve{\varepsilon}
\def\ZZ{\mathbb{Z}}

\def\CC{\mathbb{C}}

\def\Hom{\operatorname{Hom}}
\def\Ind{\operatorname{Ind}}
\def\calO{\mathcal{O}}
\def\calB{\mathcal{B}}

\def\rank{\operatorname{rank}}
\def\Ann{\operatorname{Ann}}
\def\Lie{\operatorname{Lie}}
\def\Rep{\operatorname{Rep}}
\def\Spec{\operatorname{Spec}}
\def\id{\operatorname{id}}
\def\la{\langle}
\def\ra{\rangle}

\usepackage{amssymb}
\usepackage{amscd}
\usepackage{amsmath}
% added by iulian:
\usepackage[bookmarks]{hyperref}
\usepackage{a4wide}
\usepackage{tikz}
\usetikzlibrary{matrix}
\usepackage{enumerate}
\input xy 
\xyoption{all}
\title{On small modules for quantum groups at roots of unity}
\newcommand{\comgio}[1]{\textcolor{red}{\textbf{#1}}}
\newcommand{\comiul}[1]{\textcolor{blue}{\textbf{#1}}}

\newtheorem{theorem}{Theorem}[section]
\newtheorem{conjecture}{Conjecture}
\newtheorem{lemma}[theorem]{Lemma}
\newtheorem{corollary}[theorem]{Corollary}
\newtheorem{proposition}[theorem]{Proposition}

\theoremstyle{definition}
\newtheorem{remark}[theorem]{Remark}

\author{Giovanna Carnovale}%\\
\address{Dipartimento di Matematica\\
via Trieste 63 - 35121 Padova - Italy}
\email{carnoval@math.unipd.it}
\thanks{G.C. was partially supported by Grant CPDA125818/12 of Universit\`a di Padova and MIUR PRIN 2012KNL88Y\_004}

\author{Iulian I. Simion}%\\
\address{Dipartimento di Matematica\\
via Trieste 63 - 35121 Padova - Italy}
\email{iuliansimion@gmail.com}
\thanks{I.I.S. was supported by the Universit\`a di Padova (grants CPDR131579/13 and CPDA125818/12). }

%\date{}

\begin{document}

\maketitle

\begin{abstract}
A conjecture of De Concini Kac and Procesi provides a bound on the minimal possible dimension 
of an irreducible module for  quantized enveloping algebras at an odd root of unity. We pose the problem of the existence of modules whose dimension equals this bound.
We show that this question cannot have a positive answer in full generality and discuss variants of this question. 
%To this end, we relate Sevostyanov's theory with the representation theory of different isogeny classes of 
%quantized enveloping algebra at roots of unity.
\end{abstract}

%------------------------------------------------------------------------
%
% INTRO
%
%------------------------------------------------------------------------
\section{Introduction}

Let $\ell$ be an odd positive integer. A construction due to De Concini Kac and Procesi \cite{DCKP} associates with an
 irreducible representation $V$ of a quantized enveloping algebra at an
$\ell$-th root of unity a conjugacy class $\Oc_V$ in a corresponding algebraic group $G$. They conjectured
that the dimension of each irreducible module $V$ is divisible by $\ell^{\frac{1}{2}\dim \Oc_V}$.
The De Concini-Kac-Procesi (DCKP for short) conjecture was confirmed in \cite{DCKP-remarkable} for regular conjugacy classes, in \cite{cantarini-subregular} for subregular unipotent classes in type $A_n$, in \cite{cantarini-modp} for all conjugacy classes in type $A_n$  when $\ell$ is a prime, in \cite{cantarini-spherical} and \cite{ccc} for spherical conjugacy classes for arbitrary $G$. An approach to settle the conjecture for unipotent conjugacy classes was given in \cite{kremnitzer}. 
In \cite{sev1,sev2}, which at the time of writing are under refereeing process, Sevostyanov proposes a strategy for the proof of the DCKP conjecture for simply-connected quantized enveloping algebras, with some restrictions on $\ell$.  

We are interested in the assumptions under which the bound in the DCKP conjecture is attained, i.e.,
in conditions for the existence of a module $V$ of dimension $\ell^{\frac{1}{2}\dim \Oc_V}$. We call such a module $V$ a 
{\em small module}. In the case of modular Lie algebras, the analogue problem  was formulated in \cite[\S 8]{humphreys-conj} and 
is usually referred to as Humphreys conjecture. For an account on the development of this conjecture, see \cite{pre-man}.
%we will refer to this question, and its possible variants, as to the {\em quantum Humphreys conjecture}. 

%This question does not have a positive answer in full generality as is shown in Section \ref{sec:l_prime}. %for the algebras studied by Sevostyanov, cf. Remark \ref{rem:counter}. 
One of the big differences with the modular case is that for each reductive Lie algebra $\g$, there are several quantized enveloping algebras, each corresponding to a lattice $M$
 between the root lattice and the
weight lattice of $\g$. We say that $M$ is the isogeny type of the algebra. The theory is well developed for the simply connected case (i.e., when $M$ is the weight lattice),
 but does not seem to allow for standard 
inductive arguments as in the case of modular Lie algebras, as quantized Levi subalgebras are not always simply connected.

This motivates our analysis of different isogeny types both with respect to the DCKP-conjecture and to the question of the existence of small modules.

%For this reason we analyze here the theory for different isogeny classes and state variants of the quantum Humphreys conjecture in this setting. 

The paper is structured as follows: we recall in Section \ref{sec:preliminaries} the DCKP construction, in particular we see that every irreducible module for a quantized enveloping algebra is
a representation of some finite-dimensional quotient $U_\eta^M(\g)$, corresponding to an $\ell$-character $\eta$. Each $\ell$-character is associated with a conjugacy class in $G$. 
In Section \ref{sec:reduced} we relate the finite-dimensional quotients $U_\eta^M(\g)$ and $U_\eta^N(\g)$ corresponding to different isogeny types in Theorem \ref{thm:iso}: in particular we
provide necessary and sufficient conditions on $\ell$ ensuring that $U_\eta^M(\g)$ is independent of the isogeny type.  As a consequence, under these assumptions on $\ell$, in order to confirm  the DCKP conjecture for all isogeny types it is enough to prove it for one type (Corollary \ref{cor:l_prime}). In Section \ref{sec:positive} we give an inductive argument using parabolic induction
which reduces the quest for small modules  
to rigid conjugacy classes and settles their existence for $\mathfrak{sl}_{n+1}$ when $(\ell, (n+1)!)=1$. The results we get are in analogy with the ones in \cite{fri-par}. The last Section is devoted to 
the cases in which the assumption on $\ell$ is not verified. We show that in this case small modules may fail to exist for  $U_\eta^N(\g)$ even if they exist for $U_\eta^M(\g)$. 
A complete answer to the question of existence of small modules for central elements in $G$ is given in Proposition \ref{prop:central}. As a conclusion, we formulate Conjecture \ref{conj:small} which is 
a quantum analogue of Humphreys conjecture.

\section{Notation}
\label{sec:notation}

Let $\ve$ be a primitive $\ell$-th root of $1$. Throughout $\g$ denotes a fixed semisimple Lie algebra of rank $n$ with root lattice $Q$ and weight lattice $\Lambda$. 
We assume that $\ell$ is odd and coprime with $3$ if $\g$ has a component of type $G_2$.

For a lattice $M$ such that $Q\subseteq M\subseteq \Lambda$ 
we denote by $G_M$ the complex semisimple algebraic group with $\Lie(G_M)=\g$ and of isogeny type determined by $M$. We let $T_M\subseteq G_M$ be a 
maximal torus and $B_M\subseteq G_M$ a Borel subgroup containing $T_M$. 
The opposite Borel subgroup is denoted by $B_{M}^{-}$. The unipotent radical of $B_M$ does not depend on $M$ and we denote it by $U$ so $B_M=T_M\ltimes U$.
%and it is clear from the context that $U$ is viewed as a subgroup of $G_M$. 
Similarly, the radical of $B_M^{-}$ is denoted by $U^{-}$.

The set of roots w.r.t. $T_M$ is denoted by $\Phi$, the set of positive roots w.r.t. $B_M$ is denoted by $\Phi^{+}$ and $\Delta=\{\alpha_1,\ldots,\alpha_n\}$ stands for the set of simple roots determined by 
$B_{M}$. We use the Bourbaki numbering of the simple roots. If $\Pi\subseteq\Delta$ then $\Phi_{\Pi}:=\Phi\cap\ZZ\Pi$. The $i$th fundamental weight  is denoted by $\lambda_i$
 and the longest element of the Weyl group $W$ of $\Phi$ is denoted by $w_0$.

We choose the $W$-invariant Euclidean norm $(\_|\_)$ on ${\mathbb R}\Delta$ so that $(\alpha|\alpha)=2$ for short roots in all simple factors of $\g$. 

%the corrsponding Cartan matrix is $a_{ij}=2(\alpha_i,\alpha_j)/(\alpha_i,\alpha_i)$. In particular $2(\lambda_i,\alpha_j)/(\alpha_j,\alpha_j)=\delta_{ij}$ and 
%$\alpha_i=\sum_j a_{ji}\lambda_j$. There are relatively prime positive integers $d_1,\dots,d_n$ such that $(d_ia_{ij})$ is symmetric and $d_ia_{ii}=2$ for all short roots $\alpha_i$. This defines a perfect bilinear pairing
%$(\_|\_):\Lambda\times Q\rightarrow\ZZ$ given by $(\lambda_i|\alpha_j)=d_j\delta_{ij}$ such that $(\alpha_i|\alpha_j)=a_{ji}d_j=d_ia_{ij}$. \comgio{We assume throughout the paper that $(\ell, d_i)=1$ for every $i$, i.e., that $\ell$ is odd and
%coprime with $3$ if $\Phi$ has a component of type $G_2$.}%
%\comiul{This assumption is not visible here. Perhaps we could merge the intro section and the notation section by replacing the `2. Notation' title with a `bigskip' and after the notation is finished we put another `bigskip' and mention the assumptions on $\ell$ and the main results and goals. If we do this than we have the notation at our disposal. Also, it would be good to move the conditions for Humphreys conjecture in the intro so that the constant $b(\g)$ is available when we give the conditions for Sevostyanov's proof}

For the lattice $M$ as above, we denote by $U_\ve^M(\g)$ the De Concini-Kac quantized enveloping algebra at the root $\ve$ of one.
It is defined by generators $E_{\alpha}, F_{\alpha} (\alpha\in \Delta)$ and $K_{\mu}(\mu\in M)$ subject to the relations
\begin{eqnarray}
&
K_0=1,
\quad
K_{\mu}K_{\nu}=K_{\mu+\nu}
%\quad\text{for all }\mu,\nu\in M
\\
&
K_{\mu}E_{\alpha}=\ve^{(\mu\mid\alpha)}E_{\alpha}K_{\mu}
%\quad\text{for all }\mu\in M\text{ and }\alpha\in\Delta
\\
&
K_{\mu}F_{\alpha}=\ve^{-(\mu\mid\alpha)}F_{\alpha}K_{\mu}
%\quad(\mu\in\Pi_{M})
\\
&E_{\alpha}F_{\beta}-F_{\beta}E_{\alpha}=\delta_{\alpha,\beta}\frac{K_{\alpha}-K_{-\alpha}}{\ve_{\alpha}-\ve_{\alpha}^{-1}}
\\
&
\sum_{{a,b\geq0}
\atop
{a+b=1-2\frac{(\alpha|\beta)}{(\alpha|\alpha)}}}(-1)^{a}{a+b\brack a}_{\ve_\alpha}E_{\alpha}^{a}E_{\beta}E_{\alpha}^{b}=0
\\
&
\sum_{{a,b\geq0}
\atop
{a+b=1-2\frac{(\alpha|\beta)}{(\alpha|\alpha)}}}
(-1)^{a}{a+b\brack a}_{\ve_\alpha}F_{\alpha}^{a}F_{\beta}F_{\alpha}^{b}=0
\end{eqnarray}
%where we use divided power notation as in \cite{lusztig-rootof1} and
where ${c\brack d}_{e}=\frac{[c]_e!}{[d]_e![c-d]_e!}$, $[c]_e!=[c]_e[c-1]_e\cdots [1]_e$, $[c]_e=\frac{e^c-e^{-c}}{e-e^{-1}}$
and $\ve_\alpha=\ve^{\frac{(\alpha|\alpha)}{2}}$.

If $\g=\g_1\oplus\cdots \oplus\g_r$ is the decomposition of $\g$ into simple factors, 
 $M=M_1\oplus\cdots \oplus M_r$,
% we set $\ve_i:=\ve_\alpha$ if $(\alpha|\alpha)>2$ for all $\alpha\in\Phi_i$ and $\ve_i:=\ve$ otherwise.
we have the isomorphism
%\begin{equation}
%\label{eq:product}
$U_\ve^M(\g)\simeq \bigotimes_{i=1}^r U_{\ve}^{M_i}(\g_i)$. 
%\end{equation}

%$\Phi_{\Pi}=\Phi_1\sqcup \cdots \sqcup \Phi_r$ and
%$Q_{\Pi}=Q_{\Pi_1}\oplus\cdots \oplus Q_{\Pi_r}$,

%The algebra  $U_\ve^M(\g)$ is ${\mathbb Z}\Delta$-graded, by setting $\deg(E_\alpha)=\alpha$, $\deg(F_\alpha)=-\alpha$, $\deg(K_\lambda)=0$ \comiul{[we don't use this grading]}.
For every choice of the lattice $M$ and every reduced decomposition $w_0$, we have root vectors $E_\gamma$, $F_\gamma$ for $\gamma\in\Phi^+$ \cite{lusztig-rootof1}.
The set of root vectors of $U_\ve^M(\g)$ is the union of the sets of root vectors for each factor $U_\ve^{M_i}(\g_i)$ . 
%ir construction is compatible with the product decomposition.
%which are homogenous of degree $\gamma$ and $-\gamma$, respectively 
%\cite{lusztig-rootof1}. \comgio{Even if the algebra defined in there is slightly different, the construction of the root vectors takes place in $U_\ve^Q(\g)$ and does not depend on the chosen lattice.}

%For an $r$-tuple of primitive $\ell$-th roots of unity 
%$\boldsymbol{\epsilon}=(\varepsilon_1,\ldots,\varepsilon_r)$ we write
%$U_{\boldsymbol{\epsilon}}^M(\g)$ for the product $\bigotimes_{i=1}^r U_{\ve_i}^{M_i}(\g_i)$. 

%------------------------------------------------------------------------
%
% NOTATION
%
%------------------------------------------------------------------------
%\section{Notation}\label{sec:notation}
%\begin{multicols}{2}
\medskip

The notation for the main objects that we consider is as follows: 
\begin{itemize}%[label={}]
%\item $\g$ a simple complex Lie algebra
%\item $Q\subseteq \Lambda$ the root lattice and the weight lattice of $\g$.
%\item $G_{M}$ the algebraic group of type $M$ for a lattice $Q\subseteq M\subseteq\Lambda$
%\item $T_{M}\subseteq B_{M}$ a fixed maximal torus and a fixed Borel subgroup of $G_{M}$
%\item $\ell$ odd integer ( such that $(d_i,\ell)=1$ where $d_i$ \dots )
%\item $\ve$ primitive $\ell$-th root of unity
%\item $g_s,g_u$ the semisimple and unipotent part of $g\in G_M$ in the Jordan decomposition
\item $\calO_g^{H}$ denotes the conjugacy class of $g$ in the group $H$, if it is clear what $H$ is we write $\calO_g$
\item $C_H(S)$ for $H$ a group and $S\subseteq H$ is the centralizer of $S$ in $H$
\item $H^\circ$ denotes the identity component of the algebraic group $H$
\item $\Rep(A)$ is the set of modules of the algebra $A$ up to isomorphism
\item $\Spec(A)$ is the set of simple modules of the algebra $A$ up to isomorphism
%\item $U_{\ve}^{M}(\g)$ the \emph{quantized enveloping algebra of type $M$ at $\ve$}
\item $U_{\ve}^{M}(\h)$ is the subalgebra of $U_{\ve}^{M}(\g)$ generated by the $K_\lambda$ for $\lambda\in M$
%\item $U_{\ve}^{X}$ for $X\subseteq M$ is the subalgebra of $U_{\ve}$ ...
\item $Z_{\ve}^{M}(\g)$ is the center of $U_{\ve}^{M}(\g)$
\item $Z_{0}^{M}(\g)$ is the \emph{$\ell$-center} of $U_{\ve}^{M}(\g)$, i.e. the central subalgebra generated by $E^\ell_\alpha$, $F^\ell_\alpha$, for $\alpha\in\Phi^+$ and $K_{\mu}^{\ell}$ for $\mu\in M$
\item $U_{\eta}^{M}(\g)=U_{\ve}^{M}(\g)/(\ker\eta)U_{\ve}^{M}(\g)$ is the \emph{reduced quantized enveloping algebra} corresponding to $\eta\in\Spec(Z_0^{M}(\g))$.
%\item $X^{Y}(Z)$ where $X\in\{U_{\ve},U_{\eta}\}$, $Y\subseteq M$ and $Z\subseteq \Phi$, is the subalgebra of $X^{M}$ generated by $\{K_{\mu}:\mu\in Y\}$ and root vectors corresponding to roots in $Z$
%\item $Z_0^{Y}(Z)$ where $Y\subseteq M$ and $Z\subseteq \Phi$, is the subalgebra of $X$ generated by $\{K_{\mu}^{\ell}:\mu\in Y\}$ and $\ell$-th powers of root vectors corresponding to roots in $Z$
%\item $X(Z)$ where $X\in\{U_{\ve}^{M},U_{\eta}^{M},Z_{0}^{M}\}$ and $Z\subseteq \Phi$ stands for $X(M,Z)$.
\item $U_{\eta}^{M}(\h)$ is the subalgebra of $U_{\eta}^{M}(\g)$ generated by $K_{\lambda}$ for $\lambda\in M$ 
%where $X\in\{U_{\ve}^{M},U_{\eta}^{M},Z_{0}^{M}\}$ stands for $X(M,\emptyset)$.
%$U_{\eta}^{M}(X)$ is short for $U_{\eta}^{M,M}(X)$.
%\item $Z_{0}^{M}(\g)$ the $\ell$-center of $U_{\ve}^{M}$
%\item $d_{\eta}$ half the dimension of the conjugacy class of $\eta\in G_{M}$ for some/any $M$.
\end{itemize}
%Column 2
%\end{multicols}
%Fix $\Pi\subseteq\Delta$ and let $\Phi_{\Pi}=\Phi\cap\ZZ\Pi$. The set $\Pi$ defines the parabolic subalgebra $\mathfrak{p}\supseteq\Lie(U)$ with Levi subalgebra $\mathfrak{l}$ then 
%$U_{\ve}^{M}(\mathfrak{p})=U_{\ve}^{M}(\Phi_{\Pi}\cup\Phi^{+})$, $U_{\eta}^{M}(\mathfrak{l})=U_{\ve}^{M}(\Phi_{\pi})$ and $Z_0^{Q}([\mathfrak{l},\mathfrak{l}])=Z_0^{Q_{\Pi}}(\Phi_{\Pi})$.

%The centralizer of $k$ in $K$ will be denoted by $K^k$, whereas the centralizer in $K$ of a subgroup $S$ will be denoted by $C_K(S)$. 
%For any algebraic group $K$, its identity component will be indicated by $K^\circ$.
%for $g\in G$ we write $g=g_sg_u$ for the Jordan \CC[T_N]decomposition of $g$ where $g_s$ is the semisimple part and $g_u$.
%By a gothic letter we will usually indicate the Lie algebra of the group indicated by the corresponding capital letter, for instance $\g={\rm Lie}(G)$, $\h={\rm Lie}(H)$. 
%The subalgebra generated by the $K_\lambda$ for $\lambda\in M$ will be denoted by $U_\ve^M(\h)$.
%The skew primitive generators, corresponding to the simple roots and their negatives, are denoted by $E_\alpha$ and $F_\alpha$, for $\alpha\in\Delta$. 

%and isogeny class of the group $G'$ in \cite[page 175]{DCKP}.

%------------------------------------------------------------------------
%
% THE SETTING
%
%------------------------------------------------------------------------
\section{Preliminaries}
\label{sec:preliminaries}

\subsection{}
\label{subsec:poisson}

For any reduced decomposition of $w_0$, we have an ordering of the positive roots and a choice of root vectors. Ordered monomials in the 
 root vectors and the $K_{\mu_j}$, for a ${\mathbb Z}$-basis $\mu_1,\,\ldots,\mu_n$ of $M$,
 form a PBW basis for $U_\ve^M(\g)$. This is proved in \cite[Proposition 4.2]{lusztig-rootof1} for $M=Q$ and the construction works for general $M$.
%We follow De Concini and Kac in associating to a lattice $M$ between $Q$ and $\Lambda$ a quantum group and, by construction all $U_{\ve}^{M}(\g)$ have a PBW-basis.

In addition, $U_\ve^M(\g)$ is a Hopf algebra, \cite[\S9.1]{DCP}. Its $\ell$-center is the central Hopf subalgebra $Z_0^M(\g)$ \cite[\S5.6 Proposition (d)]{DCKP} 
generated by $\ell$-th powers of the root vectors and of the $K_{\mu}$ \cite[\S4]{DCK}. Considering the same ordering of positive in order to describe $U$ as a product of root subgroups, the group $U$ is identified with
$\Spec(\CC[E_\alpha^\ell,\,\alpha\in\Phi^+])$; similarly,
 $U^-\simeq\Spec(\CC[F_\alpha^\ell,\,\alpha\in\Phi^+])$ and 
$\Spec({\mathbb C}[K_{\mu_j}^{\pm\ell},\,j=1,\,\ldots,\,n])\simeq {\rm Hom}_{\mathbb Z}(M, {\mathbb C}^\times)\simeq T_M$.
Combining these identifications we get isomorphisms of varieties 
\begin{eqnarray*}
\Spec(Z_0^M(\g))&\simeq &  \Spec(\CC[F_\alpha^\ell,\alpha\in\Phi^+])\times \Spec({\mathbb C}[K_{\mu_j}^{\pm\ell},j=1,\ldots,n])\times \Spec(\CC[E_\alpha^\ell,\alpha\in\Phi^+])\\
& \simeq & U^-\times T_M \times U\\
&\simeq & \{(t^{-1}u,tv)~|~t\in T_M, u\in U^-,v\in U\}.
\end{eqnarray*}
In addition, $Z_0^M(\g)$ is a finitely generated commutative Poisson Hopf algebra  \cite[\S7.3]{DCKP} and the composition of the above isomorphisms is an isomorphism of Poisson algebraic groups \cite[\S7.6]{DCKP}.

If $M\subset N$, the natural inclusion $\iota\colon U_{\ve}^{M}(\g)\subset U_{\ve}^{N}(\g)$ 
induces an inclusion  $Z_0^M(\g)\to Z_0^N(\g)$ which we denote by the same symbol. 

\subsection{}
\label{subsec:DCKPmap}
Considering the PBW basis, $U_{\ve}^{M}(\g)$ is a free $Z_0^M(\g)$-module 
with basis given by the  subset of the PBW monomials of $U_\ve^M(\g)$ with all exponents in $\{0,\,\ldots,\ell-1\}$. As a consequence, $U_{\ve}^{M}(\g)$ is finite over its center and 
every irreducible representation of $U_{\ve}^{M}(\g)$ is finite-dimensional \cite[\S11.1A]{CP}. By Schur's Lemma we have the natural restriction map $\xi_M\colon \Spec(U_{\ve}^{M}(\g))\to \Spec (Z_0^M(\g))$. 
Let $\pi_M\colon \Spec(Z_0^M(\g))\to B^-_MB_M$ be given by $(x,y)\mapsto x^{-1}y$, where we use the identification in \S \ref{subsec:poisson}. By the formulas in \cite[0.5]{DCKP} the map $\pi_M$ is the product of the 
$\pi_{M_i}$ for each simple component of $\g$, since $Z_0^M(\g)=\bigotimes_{i=1}^rZ_0^{M_i}(\g_i)$ (notation as in \S \ref{sec:notation}).

If $M\subset N$, we have an isogeny $\phi_{NM}\colon G_N\to G_M$.
Again using \S \ref{subsec:poisson}, $\iota^\ast$ is the restriction of $\phi_{NM}\times \phi_{NM}$ to $\{(t^{-1}u,tv)~|~t\in T_N, u\in U^-,v\in U\}\leq G_N\times G_N$.
%\comiul{[I don't understand this statement, what morhism, induced where]}. 
We have surjective horizontal maps in the commutative diagram below \cite{DCKP}.

\begin{figure}[!h]
\centering
\begin{tikzpicture}
  \matrix (m) [matrix of math nodes, row sep=3em, column sep=3em]{
    |[name=UM]| \Spec (U_{\ve}^{M}(\g)) & |[name=Z0M]|\Spec (Z_{0}^{M}(\g))  & |[name=GM]| B_{M}^{-}B_M\subseteq G_{M}\\
    |[name=UN]| \Spec (U_{\ve}^{N}(\g)) &  |[name=Z0N]|\Spec (Z_{0}^{N}(\g)) & |[name=GN]| B_{N}^{-}B_N\subseteq G_{N}\\
    };
  \draw[->]
        (UM) edge node[auto] {$\xi_M$}  (Z0M)
        (Z0M) edge node[auto] {$\pi_M$} (GM)

        (UN) edge node[auto] {$\xi_N$}  (Z0N)
        (Z0N) edge node[auto] {$\pi_N$} (GN)

        (Z0N) edge node[auto] {$\iota^{\ast}$} (Z0M)
        (GN) edge node[auto] {$\phi_{NM}$} (GM)
  ;
\end{tikzpicture}
\end{figure}
Let $\eta\in \Spec(Z_0^M(\g))$ and let  $V\in \Spec(U_{\ve}^M(\g))$. We say that $V$ has \emph{$\ell$-character} $\eta$ if $\xi_M(V)=\eta$. Since $\xi_M$ is surjective and $\iota^*$ is induced from 
$\phi_{NM}\times \phi_{NM}$,
%\comiul{[surjectivity of this map is not enough for the conclusion, we need $\iota^{\ast}$ is $\phi_{NM}$]}
any $\ell$-character of $U_{\ve}^{M}(\g)$ lifts to exactly 
$|N/M|$ $\ell$-characters of $U_{\ve}^{N}(\g)$. Moreover, considering the rows of the diagram, every $V\in\Spec(U_{\ve}^M(\g))$ is an irreducible module over the algebra
$$
U_{\eta}^M(\g):=U_{\ve}^M(\g)/({\rm Ker }\,\eta)U_{\ve}^M(\g)
$$
for $\eta=\xi_M(V)$. We call such quotients {\em reduced quantized enveloping algebra}. 

By abuse of notation, we denote the image of the generators of $U_{\ve}^M(\g)$ in $U_{\eta}^M(\g)$ with the same symbols.
% Using the  $Z_0^M(\g)$-basis of $U_{\ve}^{M}(\g)$ 
%with basis given by the  subset of the PBW basis elements of $U_\ve^M(\g)$ with all exponents in $\{0,\,\ldots,\ell-1\}$ 
%(This is clear from the PBW basis of $U_{\eta}^{M}(\g)$
%and the definition of $Z_{0}^{M}(\g)$). 
For any $\eta\in\Spec(Z_0^{M}(\g))$, the image of any  $Z_0^M(\g)$-basis of $U_{\ve}^{M}(\g)$  through the canonical projection $U_{\ve}^M(\g)\rightarrow U_{\eta}^M(\g)$ 
is a linear basis of $U_{\eta}^M(\g)$. In particular $\dim U_{\eta}^M(\g)=\ell^{\dim \g}$ and fixing a basis $X=\{\mu_1,\dots,\mu_n\}$ of $M$, we may choose the basis to consist of monomials of the form 
$$
{\bf F}^A{\bf K}_X^B{\bf E}^C:=F_{\alpha_N}^{a_N}\cdots F_{\alpha_1}^{a_1}K_{\mu_1}^{b_1}\dots K_{\mu_n}^{b_n}E_{\alpha_1}^{c_1}\cdots E_{\alpha_N}^{c_N}
$$
where $N=|\Phi^{+}|$, $B=(b_1,\dots,b_n)$, $A=(a_N,\dots,a_1)$ and $C=(c_1,\dots,c_N)$ are tuples of elements in $\{0,\,\ldots,\ell-1\}$.
\subsection{}
\label{subsec:automorphisms}
Let $(\sigma_1,\,\ldots,\,\sigma_n)\in\{\pm1\}^n$. For a given ${\mathbb Z}$-basis $\mu_1,\,\ldots,\mu_n$ of the lattice $M$, we consider the algebra automorphism $\sigma$ of $U_\ve^M(\h)$ 
given by  $\sigma(K_{\mu_i})=\sigma_i K_{\mu_i}$.  
For every $j=1,\,\ldots,\,n$ we have $\sigma(K_{\alpha_j})=\tau_j K_{\alpha_j}$  for some $\tau_j\in\{\pm1\}$. Then, $\sigma$ extends to an algebra automorphism of  $U_\ve^M(\g)$ by setting
$\sigma(F_{\alpha_j})=F_{\alpha_j}$ and $\sigma(E_{\alpha_j})=\tau_j  E_{\alpha_j}$.
% Indeed $\sigma$ preserves the relations defining the algebra $U_{\ve}^{M}(\g)$.

\subsection{}
For a lattice $M$, let $M'=\Lambda\cap \frac{1}{2}M$, and the corresponding central isogeny $\phi_{M'M}\colon G_{M'}\to G_M$. By \cite[p. 176]{DCKP}, the map $\pi_M$ 
factors through the big cell of $G_{M'}$, so we have the following composition of maps:

\begin{figure}[!h]
\centering
\begin{tikzpicture}
  \matrix (m) [matrix of math nodes, row sep=3em, column sep=3em]{
    |[name=UM]| \Spec (U_{\ve}^{M}(\g)) & |[name=Z0M]|\Spec (Z_{0}^{M}(\g))  & |[name=GM']| B_{M'}^{-}B_{M'}\subseteq G_{M'}   & |[name=GM]| B_{M}^{-}B_{M}\subseteq G_{M}  \\
      };
  \draw[->]
        (UM) edge node[auto] {$\xi_M$}  (Z0M)
        (Z0M) edge node[auto] {$\pi'$} (GM')
       (GM') edge node[auto] {$\phi_{M'M}$} (GM)
  ;
\end{tikzpicture}
\end{figure}

By \cite[\S 6.6 Theorem]{DCKP}, if $g=\pi'(\eta)\in G_{M'}\setminus Z(G_{M'})$, 
then for every $\tau \in (\pi')^{-1}(\calO_g)$ we have $U_{\eta}^M(\g)\simeq U_{\tau}^M(\g)$. 
Now, $\calO_g$ has a representative in $B_{M'}^-$, so we may assume $g=\pi'(\eta)\in B_{M'}^-$. Therefore $\pi_M(\eta)\in B_M^-$. Thus, by definition of the map $\pi_M$ in \S \ref{subsec:DCKPmap} and the identification in \S \ref{subsec:poisson}, we have $\eta(E_\alpha^{\ell})=0$ for every $\alpha\in\Phi^+$. If $h=\phi_{M',M}(g)\in B^-_M$ then $\pi_M^{-1}(\calO_h)\subset \bigcup_{z\in \Ker\phi_{M',M}} (\pi')^{-1}(\calO_{zg})$. 
For any $z\in \Ker \phi_{M',M} $, we choose $\delta\in (\pi')^{-1}(zg)$. Since $\Ker \phi_{M',M} \subset\{z\in Z(G_{M'})~|~ z^2=1\}$, we have $\eta(E_\alpha^\ell)=\delta(E_\alpha^\ell)=0$, $\eta(F_\alpha^\ell)=\delta(F_\alpha^\ell)$ 
and $\eta(K_{\mu_j}^\ell)=\sigma_j\delta(K_{\mu_j}^\ell)$, for $\sigma_j\in\{\pm1\}$. The automorphism $\sigma$ from paragraph \ref{subsec:automorphisms} induces an isomorphism $U_{\eta}^M(\g)\simeq U_{\delta}^M(\g)$. 
In other words, if $h=\pi_M(\eta)\in G_{M}$, then for every $\delta \in (\pi_M)^{-1}(\calO_h)$ we have $U_{\eta}^M(\g)\simeq U_{\delta}^M(\g)$.

\subsection{}
\label{subsec:Q}
In 1992, De Concini, Kac, Procesi formulated the following conjecture: 
\begin{conjecture}
\label{conj:DCKP}
If $V$ is an irreducible $U_{\eta}^{M}(\g)$-module with $\pi_M(\eta)=g$ then $\ell^{\frac{1}{2}\dim\calO_g}\mid\dim V$.
\end{conjecture}

 Since the map $\pi_M$ is compatible with the tensor product decomposition of $U^M_\ve(\g)$, verification of the conjecture can be reduced to the case of $\g$ simple.

\bigskip

Note that, since the diagram in \S \ref{subsec:DCKPmap} is commutative and since $\phi_{NM}$ is a central isogeny, \\
$\dim \calO_{\pi_M(\iota^*\eta)}^{G_M}=\dim \calO_{\pi_N(\eta)}^{G_N}$ for any $\eta\in \Spec(Z_0^N(\g))$.

The following  questions arise:

\begin{enumerate}
\item \label{Q1} Let $M\subset N$. Under which assumptions Conjecture \ref{conj:DCKP} for the lattice $M$ follows from or implies Conjecture \ref{conj:DCKP} for the lattice $N$? 
\item \label{Q2}Let $\eta\in \Spec(Z_0^M(\g))$.  Does there exist a $U_{\eta}^{M}(\g)$-module of dimension $\ell^{\frac{1}{2}\dim\calO_{\pi_M(\eta)}}$? Is it irreducible?
\item \label{Q3} Let $M\subset N$. Under which assumptions can we deduce an  answer to Question \eqref{Q2} for the lattice $M$ from the case of the lattice  $N$ or viceversa? 
\end{enumerate}

Question \eqref{Q2} is the quantum analogue of a problem posed by Humphreys, on representations of restricted Lie algebras.  We call a module $V$ whose dimension satisfies  
 an equality as in Question \eqref{Q2} a {\em small module} for  $U_{\eta}^{M}(\g)$. If the DCKP holds for $\pi_M(\eta)$, then any corresponding small module is irreducible. We show in \S\ref{sec:l_prime}
 that Question \eqref{Q2} does not always have an affirmative answer and we formulate necessary conditions under which an affirmative answer can be expected.
%a conjecture on when it is true. 

\subsection{}
We close this section by noticing that if $(\ell,|N/M|)=1$, the center $Z_{\ve}^{M}(\g)$ of $U_{\ve}^{M}(\g)$ behaves well with respect to inclusion. This fact will not be needed in the sequel.

\begin{lemma}
\label{lem:centers}
Let $M\subset N$ be such that  $(\ell,|N/M|)=1$. Then, $Z_{\ve}^{M}(\g)\subseteq Z_{\ve}^{N}(\g)$. In particular, this holds for every $M,N$ whenever $(\ell, |\Lambda/Q|)=1$.
\end{lemma}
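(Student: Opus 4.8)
The key structural fact is that $U_\ve^N(\g)$ is a free module over the subalgebra $U_\ve^M(\g)$, with a basis indexed by the finite group $N/M$. Indeed, from the PBW description in \S\ref{subsec:poisson}, $U_\ve^M(\g)$ and $U_\ve^N(\g)$ share the same root vectors $E_\gamma, F_\gamma$; the only difference is the toral part, where $U_\ve^M(\h)$ sits inside $U_\ve^N(\h)$ as the sub-Hopf-algebra spanned by the $K_\mu$, $\mu\in M$. Choosing coset representatives $\nu_1,\dots,\nu_k$ for $N/M$ (with $k=|N/M|$), the elements $K_{\nu_1},\dots,K_{\nu_k}$ form a basis of $U_\ve^N(\h)$ as a left (and right) module over $U_\ve^M(\h)$, and hence over $U_\ve^M(\g)$. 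I would state this as the first step.

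\textbf{Step two: the averaging/trace argument.} Let $z\in Z_\ve^M(\g)$. It is central in $U_\ve^M(\g)$, so it commutes with all $E_\alpha, F_\alpha$; these generate $U_\ve^N(\g)$ together with the $K_\nu$, $\nu\in N$. Thus it suffices to show $z$ commutes with $K_\nu$ for all $\nu\in N$. Since $z$ lies in $U_\ve^M(\g)$, write its expansion in the free basis $\{K_{\nu_i}\}$ over $U_\ve^M(\g)$ — but actually $z\in U_\ve^M(\g)$ means its expansion is just $z\cdot K_{\nu_1}$ if we take $\nu_1=0$, so this is automatic. The real point is different: I need $z K_\nu = K_\nu z$. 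Consider the inner automorphism of $U_\ve^N(\g)$ given by conjugation by $K_\nu$. On $U_\ve^M(\g)$ this acts by the diagonal automorphism sending $E_\alpha\mapsto \ve^{(\nu|\alpha)}E_\alpha$, $F_\alpha\mapsto \ve^{-(\nu|\alpha)}F_\alpha$, $K_\mu\mapsto K_\mu$ (using relations (2.2)--(2.3)). The hypothesis $(\ell,|N/M|)=1$ enters here: since $\nu\in N$ and $k\nu\in M$ for a suitable $k$ dividing $|N/M|$, and $\gcd(\ell,k)=1$, there is an integer $m$ with $m k\equiv 1\pmod \ell$; then $\ve^{(\nu|\alpha)}=\ve^{(mk\nu|\alpha)}=\ve^{(\mu|\alpha)}$ where $\mu=mk\nu\in M$. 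Hence conjugation by $K_\nu$ on $U_\ve^M(\g)$ agrees with conjugation by $K_\mu$ for some $\mu\in M$; but $z$ is central in $U_\ve^M(\g)$, so it is fixed. Therefore $K_\nu z K_{-\nu}=z$, i.e. $z\in Z_\ve^N(\g)$.

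\textbf{Step three: the special case.} For the final assertion, observe that if $(\ell,|\Lambda/Q|)=1$ then for any $M\subseteq N$ we have $|N/M|$ dividing $|\Lambda/Q|$, hence coprime to $\ell$, so the first part applies; this gives $Z_\ve^M(\g)\subseteq Z_\ve^N(\g)$ for every such pair.

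\textbf{Main obstacle.} The only subtlety is making precise the claim that conjugation by $K_\nu$ restricted to $U_\ve^M(\g)$ lands in the group generated by conjugations by $K_\mu$, $\mu\in M$ — in other words, that the relevant character $\alpha\mapsto \ve^{(\nu|\alpha)}$ on the root lattice $Q$ already comes from an element of $M$. This is exactly where coprimality is used, via the congruence $mk\equiv1\pmod\ell$, and one should be careful that $(\nu|\alpha)\in\ZZ$ for $\nu\in N\subseteq\Lambda$ and $\alpha\in Q$, so that the exponents are well-defined mod $\ell$. Everything else is a routine check that the generators $E_\alpha,F_\alpha,K_\nu$ indeed generate $U_\ve^N(\g)$ and that centrality against generators suffices.
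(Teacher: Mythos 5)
Your proof is correct, but it takes a genuinely different route from the paper's. The paper expands $z$ in the PBW basis as $z=\sum_i z_i$, observes that conjugation by $K_\omega$ scales each PBW monomial $z_i$ by a power $c_{i\omega}$ of $\ve$, deduces $c_{i\omega}^{|N/M|}=1$ from the fact that $K_\omega^{|N/M|}\in U_\ve^M(\g)$ already commutes with $z$, and then uses $(\ell,|N/M|)=1$ to force $c_{i\omega}=1$. You instead reason at the level of automorphisms: conjugation by $K_\nu$ acts on the generators of $U_\ve^M(\g)$ by $E_\alpha\mapsto\ve^{(\nu|\alpha)}E_\alpha$, $F_\alpha\mapsto\ve^{-(\nu|\alpha)}F_\alpha$, $K_\mu\mapsto K_\mu$, and the coprimality hypothesis lets you replace $\nu$ by $\mu=mk\nu\in M$ (with $mk\equiv1\pmod\ell$) without changing this action, so conjugation by $K_\nu$ restricted to $U_\ve^M(\g)$ coincides with an inner automorphism coming from $M$, which fixes any central $z$. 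Both uses of coprimality are arithmetically the same fact, but your version is more structural and avoids the PBW expansion entirely; the paper's version is perhaps a bit more self-contained in that it only invokes linear independence of a basis rather than the well-definedness of the diagonal automorphism. Your Step one (the free-module decomposition) is, as you yourself note, not actually used in the argument and could be dropped. Also, be slightly more careful in Step three: you should say that $|N/M|$ divides $|\Lambda/Q|$ whenever $Q\subseteq M\subseteq N\subseteq\Lambda$ (which follows from the tower $Q\subseteq M\subseteq N\subseteq\Lambda$ and multiplicativity of indices), hence coprimality to $\ell$ is inherited; you wrote this correctly, so this is just a matter of phrasing.
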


\begin{proof}
The algebra $U_{\ve}^{N}(\g)$ is generated by $U_{\ve}^{M}(\g)$ and the elements $K_{\omega}$ with $\omega$ running through a set $N_{M}$ of representatives of $N/M$ in $N$. Therefore  $z\in Z_{\ve}^{M}(\g)$ lies in $Z_{\ve}^{N}(\g)$ if and only if $[K_\omega,z]=0$ for every $\omega\in N_M$. Let $z=\sum z_i$ be the expression of $z$ with respect to the PBW basis. %as a sum of PBW basis vectors. , up to nonzero constant.
By construction of the PBW basis,  $K_\omega z_i =c_{i\omega} z_iK_{\omega}$ for $c_{i\omega}$ some power of $\ve$. By linear independence of the $z_i$, for every $k\geq 0$ and every $\omega$ we have $[K_\omega^k,z]=0$ if and only if $[K_\omega^k,z_i]=0$ for every $i$, which happens if and only if $c_{i\omega}^k=1$ for every $i$. Since $K_\omega^{|N/M|}\in U_{\ve}^{M}(\g)$, for every $\omega$ and every $i$ we have $c_{i\omega}^{|N/M|}=1$, which under our hypotheses forces $c_{i\omega}=1$, whence the statement.
\end{proof}

The map $\xi_M$ in \S \ref{subsec:DCKPmap}  is the composition of the (surjective) restriction maps $\chi_M$ to $Z_{\ve}^{M}(\g)$ and  $\tau_M$ to $Z_0^{M}(\g)$, so in the the diagram
\begin{figure}[!h]
\centering
\begin{tikzpicture}
  \matrix (m) [matrix of math nodes, row sep=3em, column sep=3em]{
    |[name=UM]| \Spec (U_{\ve}^{M}(\g)) & |[name=ZeM]| \Spec (Z_{\ve}^{M}(\g))& |[name=Z0M]|\Spec ( Z_{0}^{M}(\g) )\\
    |[name=UN]| \Spec (U_{\ve}^{N}(\g)) & |[name=ZeN]| \Spec (Z_{\ve}^{N}(\g) )& |[name=Z0N]|\Spec (Z_{0}^{N}(\g))\\
    };
  \draw[->]
        (UM) edge node[auto] {$\chi_M$} (ZeM)
        (ZeM) edge node[auto] {$\tau_M$} (Z0M)

        (UN) edge node[auto] {$\chi_N$} (ZeN)
        (ZeN) edge node[auto] {$\tau_N$} (Z0N)

        (ZeN) edge node[auto] {$\iota^{\ast}$} (ZeM)
        (Z0N) edge node[auto] {$\iota^{\ast}$} (Z0M)
      
  ;
\end{tikzpicture}
\end{figure}

the first vertical arrow is surjective if $(\ell, |N/M|)=1$.% by Lemma \ref{lem:centers}. 

%------------------------------------------------------------------------
%
% RED QUANT ENV ALGS
%
%------------------------------------------------------------------------
\section{Reduced quantized enveloping algebras}\label{sec:reduced}

In order to deal with the questions from \S\ref{subsec:Q}, we compare reduced algebras corresponding to different lattices $M\subset N$. 

\subsection{}\label{subsec:red_basis}
Let $\eta_{M}\in \Spec (Z_0^M(\g))$ and $\eta_{N}\in \Spec (Z_0^N(\g))$ such that $\iota^* \eta_{N}=\eta_{M}$. It follows from the inclusion $Z_0^M(\g)\subset Z_0^N(\g)$ that there is a natural algebra morphism 
$f_{MN}\colon U_{\eta_{M}}^M(\g)\to U_{\eta_N}^N(\g)$. Indeed, we always have %$Z_{0}^{M}\subseteq Z_{0}^{N}\cap U_{\ve}^{M}$ so
$$
({\rm Ker\;}\eta_M )U_{\ve}^M(\g)\subseteq ({\rm Ker\,}\eta_N\cap Z_0^M(\g))U_{\ve}^M(\g)\subseteq ({\rm Ker\,}\eta_{N})U_{\ve}^N(\g).
$$

Note that, with respect to PBW bases corresponding to the same reduced decomposition of $w_0$, we have $f_{MN}({\bf F}^{A}K_{\lambda}{\bf E}^{C})={\bf F}^{A}f_{MN}(K_{\lambda}){\bf E}^{C}$ for any $\lambda\in M$. Hence the dimension of the image equals $\ell^{2N}\dim f_{MN}(U_{\eta}^{M}(\h))$. 

%$f_{MN}$ maps monomials of the PBW-basis of $U_{\eta_M}^{M}(\g)$ to the similar monomials in $U_{\eta_M}^{N}(\g)$ up to a nonzero scalar. The situation is as follows: we have two finite dimensional algebras $A_M$ and $A_N$, a subalgebra $E$ of both, $K_M$ a subalgebra of $A_M$ and $K_N$ a subalgebra of $A_N$ such that $A_M=E\otimes_{\CC}K_M\otimes_{\CC}E$ and $A_N=E\otimes_{\CC}K_N\otimes_{\CC}E$ as vector spaces. Moreover, the algebra morphism $f_{MN}$ equals $\id\otimes_{\CC}(f_{MN}|_{K_{M}})\otimes_{\CC}\id$, hence $f_{MN}(A_M)=E\otimes_{\CC} f_{MN}(K_M)\otimes_{\CC} E$. Therefore, in understanding $f_{MN}$ we may restrict to considering $f_{MN}|_{K_M}$.

For the purpose of analyzing $f_{MN}$, note that we are not bound to considering a specific basis $X$ of $M$. For each $\mu\in X$, $\eta(K_{\mu})\neq 0$ since $K_{\mu}$ is invertible. Fix $c_{\mu}^{\eta}$ an
$\ell$-th root of $\eta(K_{\mu})$ and denote $K_{\mu}/c_{\mu}^{\eta}$ by $K_{\mu}^{\eta}$. Clearly, replacing the $K_{\mu}$ in the monomials of the basis (paragraph \ref{subsec:DCKPmap}) with $K_{\mu}^{\eta}$ 
we still have a basis of $U_{\eta}^{M}(\h)$. The latter is isomorphic to the group algebra  $\CC[M/\ell M]$. Let  $f_{N}':\CC[N/\ell N]\rightarrow \CC[N/\ell N]\simeq U_{\eta}^{N}(\h)$  
be the invertible linear map defined by 
$f_{N}^{'}(K_{\mu}^{\eta_N})=(c_{\mu}^{\eta_M}/c_{\mu}^{\eta_N})K_{\mu}^{\eta_N}$. Then 
$k_{MN}=f_{N}'\circ f_{MN}$ restricts to the canonical group homomorphism $M/\ell M\rightarrow N/\ell N$. 

\begin{lemma}
\label{lemma:lattices}
Let $N$ be a lattice and $M\subseteq N$ a sublattice of finite index. The natural group homomorphism $k_{MN}:M/\ell M\rightarrow N/\ell N$ is an isomorphism if and only if $(|N/M|,\ell)=1$.
\end{lemma}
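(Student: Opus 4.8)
The plan is to analyze the map $k_{MN}\colon M/\ell M\to N/\ell N$ directly using the theory of finitely generated abelian groups, exploiting that $M$ and $N$ are both free $\ZZ$-modules of the same rank $n$ (since $M\subseteq N$ has finite index). First I would observe that since $k_{MN}$ is a homomorphism between two finite groups of the same order $\ell^n$, it is an isomorphism if and only if it is injective, if and only if it is surjective; so it suffices to control the kernel or the cokernel.

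Next I would compute these invariants using the Smith normal form (equivalently, the structure theorem for $N/M$). Choose a $\ZZ$-basis $\nu_1,\dots,\nu_n$ of $N$ and positive integers $d_1\mid d_2\mid\cdots\mid d_n$ such that $d_1\nu_1,\dots,d_n\nu_n$ is a $\ZZ$-basis of $M$; then $N/M\simeq\bigoplus_i \ZZ/d_i\ZZ$ and $|N/M|=\prod_i d_i$. With respect to these bases the map $M/\ell M\to N/\ell N$ is, in each coordinate, the map $\ZZ/\ell\ZZ\to\ZZ/\ell\ZZ$ given by multiplication by $d_i$ (sending the class of $d_i\nu_i$ in $M/\ell M$ to $d_i$ times the class of $\nu_i$ in $N/\ell N$). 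Multiplication by $d_i$ on $\ZZ/\ell\ZZ$ is an automorphism precisely when $(d_i,\ell)=1$, and otherwise has nontrivial kernel. Hence $k_{MN}$ is an isomorphism if and only if $(d_i,\ell)=1$ for all $i$, which is equivalent to $(\prod_i d_i,\ell)=(|N/M|,\ell)=1$.

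The one point requiring a little care — and the only place where a genuine argument beyond bookkeeping is needed — is to verify that the matrix of $k_{MN}$ in these adapted bases really is diagonal with entries $d_i$, i.e. that reducing the inclusion $M\hookrightarrow N$ modulo $\ell$ commutes with the choice of Smith bases. This is immediate once one writes the inclusion as the diagonal integer matrix $\mathrm{diag}(d_1,\dots,d_n)$ and notes that tensoring the exact sequence $0\to M\to N\to N/M\to 0$ with $\ZZ/\ell\ZZ$ is right exact, giving $M/\ell M\to N/\ell N\to (N/M)/\ell(N/M)\to 0$; so $\mathrm{coker}(k_{MN})\simeq (N/M)\otimes\ZZ/\ell\ZZ$, which is trivial iff $\ell$ kills no nontrivial quotient of $N/M$, iff $(|N/M|,\ell)=1$. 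I expect no real obstacle here; the main thing is simply to be explicit about which bases realize $k_{MN}$ as a diagonal map so that the coprimality condition falls out cleanly.
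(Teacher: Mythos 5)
Your proof is correct, but it takes a genuinely different route from the paper's. The paper argues directly with elements: for the forward direction it notes that, since $M/\ell M$ and $N/\ell N$ are finite groups of the same order $\ell^{\rank N}$, it suffices to prove surjectivity of $k_{MN}$, which follows from Bezout's identity $a\ell + b|N/M| = 1$ and the resulting inclusion $N \subseteq \ell N + M$; for the converse it produces an explicit nonzero kernel element by choosing $\mu \in N\setminus M$ with $p\mu \in M$ for a common prime divisor $p$ of $\ell$ and $|N/M|$, so that $\ell\mu = \tfrac{\ell}{p}(p\mu)\in \ell N\cap M$ but $\ell\mu\notin \ell M$. You instead invoke the Smith normal form, choosing adapted bases with elementary divisors $d_1\mid\cdots\mid d_n$ so that $k_{MN}$ becomes coordinatewise multiplication by $d_i$ on $\ZZ/\ell\ZZ$, and read off the condition $(d_i,\ell)=1$ for all $i$, equivalently $(|N/M|,\ell)=1$; you also give the equivalent tensor-theoretic description $\mathrm{coker}(k_{MN}) \simeq (N/M)\otimes \ZZ/\ell\ZZ$. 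Your approach is more structural and identifies exactly where the kernel lives (the $p$-torsion of $N/M$ for primes $p\mid\ell$), at the cost of invoking the structure theorem for finitely generated abelian groups, whereas the paper's argument is more elementary and self-contained. Both are complete and valid.
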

\begin{proof}
Note that  $\ker k_{MN}=(\ell N\cap M)/\ell M$. Assume that $(|N/M|,\ell)=1$. 
Since $k_{MN}$ is an endomorphism of the finite group $Z_{\ell}^{\rank N}$ it is enough to show that $k_{MN}$ is surjective. By assumption, there are $a,b\in\ZZ$ such that $a\ell+b|N/M|=1$ so, 
for any $x\in N$ we have $x=a\ell x+b|N/M|x\in \ell N+ M$. If instead there is a prime $p$ dividing $(|N/M|,\ell)$, then, there is $\mu\in N$ such that $\mu\not\in M$ and $p\mu\in M$. 
Then, $\ell\mu=\frac{\ell}{p} (p\mu)\in \ell N\cap M$ and $\ell\mu\not\in \ell M$ hence $k_{MN}$ is not injective.
\end{proof}

For our purposes we will have to consider lattices $M\subseteq N$ for which $(|N/M|,\ell)\neq 1$. As $\ell$ is odd, they occur only for Lie algebras with components of type $A_{m}$ or $E_6$. Recall that the simple roots are in Bourbaki ordering.

\begin{lemma}
\label{lem:AnE6}
If $\g$ is simple of type $A_n$ or $E_6$, then there is $\lambda_{\Lambda}\in\Lambda$ such that $\lambda_{\Lambda},\alpha_1,\dots,\alpha_{n-1}$ is a basis for $\Lambda$ and any lattice $Q\subseteq M\subseteq \Lambda$ 
is generated by $|\Lambda/M|\lambda_{\Lambda},\alpha_1,\dots,\alpha_{n-1}$.
\end{lemma}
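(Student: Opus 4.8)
\textbf{Proof proposal for Lemma \ref{lem:AnE6}.}

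The plan is to exploit the well-known structure of the fundamental group $\Lambda/Q$ in types $A_n$ and $E_6$, namely that it is cyclic: $\Lambda/Q\cong\ZZ/(n+1)\ZZ$ in type $A_n$ and $\Lambda/Q\cong\ZZ/3\ZZ$ in type $E_6$. First I would produce a single fundamental weight $\lambda_\Lambda$ whose image generates $\Lambda/Q$; in Bourbaki numbering one may take $\lambda_\Lambda=\lambda_n$ in type $A_n$ and $\lambda_\Lambda=\lambda_1$ (or $\lambda_6$) in type $E_6$. The key claim is then that $\{\lambda_\Lambda,\alpha_1,\dots,\alpha_{n-1}\}$ is a $\ZZ$-basis of $\Lambda$. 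For this I would argue that these $n$ elements generate $\Lambda$: the sublattice they span contains $\alpha_1,\dots,\alpha_{n-1}$ and the class of $\lambda_\Lambda$ generates $\Lambda/Q$, so together with $Q$ — but $Q$ is generated by $\alpha_1,\dots,\alpha_n$, and I must check that $\alpha_n$ itself lies in the span of $\lambda_\Lambda,\alpha_1,\dots,\alpha_{n-1}$. This is the one computation to carry out: express $\alpha_n$ as an integer combination, equivalently check that some integer multiple $m\lambda_\Lambda$ with $m$ prime to the index contributes $\alpha_n$ modulo $\alpha_1,\dots,\alpha_{n-1}$. Concretely, writing the Cartan-matrix relation between $\lambda_\Lambda$ and the simple roots, the coefficient of $\alpha_n$ in $\lambda_\Lambda$ expressed in the root basis is $\pm1$ up to the known denominators; once $\alpha_n$ is recovered, all of $Q$ is in the span, hence so is $\Lambda=Q+\ZZ\lambda_\Lambda$. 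Since we then have $n$ generators for a rank-$n$ free lattice, they form a basis.

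Next I would handle an intermediate lattice $M$ with $Q\subseteq M\subseteq\Lambda$. Because $\Lambda/Q$ is cyclic of order $d=|\Lambda/Q|$, every subgroup is of the form $(e\ZZ/d\ZZ)$ for a unique divisor $e\mid d$, and it is generated by the class of $e\lambda_\Lambda$; here $e=|\Lambda/M|$ (the index of $M$ in $\Lambda$), so that $M/Q$ is generated by the image of $|\Lambda/M|\lambda_\Lambda$. Hence $M=Q+\ZZ\,|\Lambda/M|\lambda_\Lambda$. I then claim $\{|\Lambda/M|\lambda_\Lambda,\alpha_1,\dots,\alpha_{n-1}\}$ generates $M$: it generates $|\Lambda/M|\lambda_\Lambda$ and $\alpha_1,\dots,\alpha_{n-1}$, and as in the previous paragraph $\alpha_n$ is an integer combination of $\lambda_\Lambda$ and $\alpha_1,\dots,\alpha_{n-1}$; multiplying that relation by $|\Lambda/M|$ shows $|\Lambda/M|\alpha_n$ is in the span, but we need $\alpha_n$ itself. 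Here one uses that $\alpha_n\in Q\subseteq M$ together with the fact that the span already contains all of $Q$: indeed $\alpha_n = (\text{combination of }\alpha_1,\dots,\alpha_{n-1}) + (\text{integer})\cdot\lambda_\Lambda$, and the integer coefficient of $\lambda_\Lambda$ here is coprime to $d$ (it is in fact $\pm1$ in $\ZZ/d$ by cyclicity), so a Bézout argument combining this relation with $|\Lambda/M|\lambda_\Lambda\in M$ is not quite enough on its own — instead I would observe directly that $Q\subseteq M$ is automatic and that $M$, being generated by $Q$ and $|\Lambda/M|\lambda_\Lambda$, equals the lattice spanned by $\alpha_1,\dots,\alpha_{n-1},|\Lambda/M|\lambda_\Lambda$ precisely because that span already contains $\alpha_n$ (hence all of $Q$) by the type-$A_n$/$E_6$ computation. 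Finally, since $|M|$-index considerations give $n$ generators of the rank-$n$ lattice $M$ with the correct index $|\Lambda/M|$ in $\Lambda$, they form a $\ZZ$-basis.

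The main obstacle, and the only place real work is needed, is the explicit verification that $\alpha_n$ (the ``dropped'' simple root) lies in $\ZZ\lambda_\Lambda\oplus\ZZ\alpha_1\oplus\cdots\oplus\ZZ\alpha_{n-1}$ — equivalently, that the matrix expressing $(\lambda_\Lambda,\alpha_1,\dots,\alpha_{n-1})$ in terms of $(\alpha_1,\dots,\alpha_n)$ has determinant $\pm1$. In type $A_n$ this is a direct triangular computation with $\lambda_n=\frac{1}{n+1}\sum_{i=1}^n i\,\alpha_i$, whose $\alpha_n$-coefficient is $\frac{n}{n+1}$, so $(n+1)\lambda_n\equiv -\alpha_n$ modulo $\ZZ\alpha_1+\cdots+\ZZ\alpha_{n-1}$ while $\lambda_n$ itself, subtracting suitable integer multiples of $\alpha_1,\dots,\alpha_{n-1}$, realizes $\alpha_n$ up to a unit; one checks the change-of-basis determinant is $\pm1$. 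In type $E_6$ one uses the known expression of $\lambda_1$ in the root basis (coefficients $\frac{1}{3}(4,3,5,6,4,2)$ or similar, depending on numbering) and performs the analogous finite check. Both are routine given the standard tables, so I would state them and refer to Bourbaki for the coefficients rather than reproduce the arithmetic in full.
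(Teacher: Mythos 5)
Your overall strategy matches the paper's: use cyclicity of $\Lambda/Q$, exhibit one generator $\lambda_\Lambda$ such that $\{\lambda_\Lambda,\alpha_1,\dots,\alpha_{n-1}\}$ is a $\ZZ$-basis of $\Lambda$, then reduce the statement for general $M$ to the cyclic group structure. However, your specific choices of $\lambda_\Lambda$ are wrong, and the ``routine check'' you defer to Bourbaki would in fact fail. The requirement is not merely that the image of $\lambda_\Lambda$ generates $\Lambda/Q$; you need the coefficient of $\alpha_n$ in $|\Lambda/Q|\,\lambda_\Lambda$ (written in the root basis) to be $\pm 1$, so that $\alpha_n$ lies in $\langle|\Lambda/Q|\lambda_\Lambda,\alpha_1,\dots,\alpha_{n-1}\rangle$. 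In type $A_n$ with $\lambda_\Lambda=\lambda_n$, you compute correctly that this coefficient is $n$; but $n$ is not a unit in $\ZZ$, so ``realizes $\alpha_n$ up to a unit'' is false. The actual change-of-basis determinant of $\{\lambda_n,\alpha_1,\dots,\alpha_{n-1}\}$ against $\{\lambda_1,\dots,\lambda_n\}$ is the $A_{n-1}$ Cartan determinant, i.e.\ $\pm n$, so these vectors generate an index-$n$ sublattice of $\Lambda$ (already in $A_2$: $\lambda_1=\tfrac{1}{2}(\alpha_1+\lambda_2)\notin\ZZ\lambda_2+\ZZ\alpha_1$). Similarly in $E_6$: with $3\lambda_1=4\alpha_1+3\alpha_2+5\alpha_3+6\alpha_4+4\alpha_5+2\alpha_6$ the $\alpha_6$-coefficient is $2$, giving an index-$2$ sublattice, and with $3\lambda_6=2\alpha_1+3\alpha_2+4\alpha_3+6\alpha_4+5\alpha_5+4\alpha_6$ the coefficient is $4$, giving index $4$; neither works.

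The paper's choices are $\lambda_\Lambda=\lambda_1$ in type $A_n$, for which $(n+1)\lambda_1=\sum_{i=1}^n(n-i+1)\alpha_i$ has $\alpha_n$-coefficient exactly $1$, and $\lambda_\Lambda=\lambda_3-\lambda_5=\tfrac{1}{3}(\alpha_1+2\alpha_3-2\alpha_5-\alpha_6)$ in type $E_6$, for which $3\lambda_\Lambda$ has $\alpha_6$-coefficient $-1$; in both cases the needed unimodularity holds and the rest of the argument (which you have right) goes through. So the gap is precisely the one step you flagged as ``the only place real work is needed'': the coprimality of $n$ with $n+1$ (respectively of $2$ or $4$ with $3$) is not what is being used --- what is needed is that the $\alpha_n$-coefficient be a unit, and that depends sensitively on which generator of $\Lambda/Q$ you pick.
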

\begin{proof} 
We use \cite[\S13.2]{humphreys1978algebras}. For $A_n$ we have $(n+1)\lambda_1=\sum_{i=1}^{n}(n-i+1)\alpha_i$, so $\Lambda=\la\lambda_1,Q\ra$ and since the coefficient 
of $\alpha_{n}$ is $1$, we have $\Lambda=\la \lambda_1,\alpha_1,\dots,\alpha_{n-1}\ra$ as claimed with $\lambda_{\Lambda}=\lambda_1$. For $E_6$ we choose 
$\lambda_{\Lambda}:=\lambda_3-\lambda_5=\frac{1}{3}(\alpha_1+2\alpha_3-2\alpha_5-\alpha_6)$. Since $\lambda_{\Lambda}\in\Lambda\setminus Q$ and $\Lambda/Q=Z_3$ we have 
$\Lambda=\la \lambda_{\Lambda},Q\ra$. As $-3\lambda_{\Lambda}\in \lambda_n+\la\alpha_1,\dots,\alpha_{n-1}\ra$ the claim follows also in this case. The last claim follows from the fact that $\Lambda/Q$ is cyclic.
\end{proof}

The following theorem relates different isogeny types for reduced quantized enveloping algebras. 
A result comparing different isogeny types for the infinite-dimensional algebras $U_{\ve}^{M}(\g)$ is described in \cite[\S5]{DCM}. 

\begin{theorem}
\label{thm:iso}
Let $Q\subseteq M\subseteq N\subseteq\Lambda$ with $M_i\subseteq N_i$ corresponding to the simple factors of $\g$. Then $U_{\eta}^{N}(\g)$ is a free $f_{MN}(U_{\iota^{\ast}(\eta)}^{M}(\g))$-module of rank $\prod_{i}(\ell,|N_i/M_i|)$. In particular $f_{MN}$ is an algebra isomorphism if and only if $(|N/M|,\ell)=1$.
%\begin{enumerate}[(a)]
%\item \label{thm:iso0} If $(\ell,|N/M|)=1$ then $f_{MN}$ is an algebra isomorphism.
%\item \label{thm:iso1} If $(\ell,|N/M|)=1$ and $\ll$ is a standard Levi then $f_{MN}$ restricts to an isomorphism of algebras $U_{\eta}^{M}(\ll)\rightarrow U_{\eta}^{N}(\ll)$.
%\item \label{thm:iso2} If $Q\subseteq M$ then $U_{\eta}^{N}(\g)$ is a free $f_{MN}(U_{\iota^{\ast}(\eta)}^{M}(\g))$-module of rank $\prod_{i}(\ell,|N_i/M_i|)$.
%\end{enumerate}
\end{theorem}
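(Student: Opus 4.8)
The strategy is to reduce immediately to the case where $\g$ is simple, since everything in sight factors through the tensor decomposition: $U_\eta^N(\g)\simeq\bigotimes_i U_{\eta_i}^{N_i}(\g_i)$, the map $f_{MN}$ is the tensor product of the maps $f_{M_iN_i}$, and freeness and ranks multiply. So assume $\g$ is simple; by Lemma~\ref{lem:AnE6} (or trivially, when $\ell$ is coprime to $|\Lambda/Q|$ and hence to $|N/M|$) the only interesting situation is type $A_n$ or $E_6$. By the observation in \S\ref{subsec:red_basis}, the image $f_{MN}(U_{\iota^*\eta}^M(\g))$ contains all the root vectors ${\bf F}^A$ and ${\bf E}^C$ (these are literally fixed by $f_{MN}$ on the nose), so $U_\eta^N(\g)$ is a free module over $f_{MN}(U_{\iota^*\eta}^M(\g))$ of some rank if and only if the toral part $U_\eta^N(\h)\simeq\CC[N/\ell N]$ is a free module over the image of $U_{\iota^*\eta}^M(\h)\simeq\CC[M/\ell M]$, and the rank is the same. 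Using the rescaled generators $K_\mu^\eta$ and the map $k_{MN}$ of \S\ref{subsec:red_basis}, this image is exactly the image of the group-algebra homomorphism $\CC[k_{MN}]\colon\CC[M/\ell M]\to\CC[N/\ell N]$ induced by the canonical group homomorphism $k_{MN}\colon M/\ell M\to N/\ell N$.

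Thus the theorem reduces to a purely group-theoretic statement: if $k\colon A\to B$ is the canonical homomorphism of finite abelian $p$-ish groups $A=M/\ell M$, $B=N/\ell N$ (both $\cong\ZZ_\ell^{\rank N}$ as abstract groups), then $\CC[B]$ is a free $\CC[\operatorname{im} k]$-module of rank $|B|/|\operatorname{im} k|=|\ker k|$, and $|\ker k|=\prod_i(\ell,|N_i/M_i|)$ in the simple case (so a single factor $(\ell,|N/M|)$). Freeness of $\CC[B]$ over $\CC[k(A)]$ is standard: $\CC[k(A)]$ is a semisimple commutative algebra (group algebra of a finite abelian group over $\CC$), $\CC[B]$ is a finite module over it, and $B$ is a free module over $k(A)$ as a $k(A)$-set (cosets of $k(A)$ in $B$), so $\CC[B]\simeq\CC[k(A)]^{\oplus[B:k(A)]}$ as $\CC[k(A)]$-modules — indeed choosing coset representatives $b_1,\dots,b_r$ for $k(A)$ in $B$ gives a basis. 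Hence the rank is $[B:k(A)]=|\ker k|$.

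It remains to compute $|\ker k_{MN}|$. We have $\ker k_{MN}=(\ell N\cap M)/\ell M$, as already noted in the proof of Lemma~\ref{lemma:lattices}. In the simple type-$A_n$ or $E_6$ case, invoke Lemma~\ref{lem:AnE6}: $\Lambda$ has a basis $\lambda_\Lambda,\alpha_1,\dots,\alpha_{n-1}$, $N$ is generated by $|\Lambda/N|\lambda_\Lambda,\alpha_1,\dots,\alpha_{n-1}$ and $M$ by $|\Lambda/M|\lambda_\Lambda,\alpha_1,\dots,\alpha_{n-1}$, so in these coordinates $N\simeq\ZZ$ on the first factor times $\ZZ^{n-1}$, with $M\subseteq N$ being $|N/M|\ZZ\times\ZZ^{n-1}$. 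Then $\ell N\cap M$ is $\mathrm{lcm}(\ell,|N/M|)\ZZ\times\ell\ZZ^{n-1}$ while $\ell M$ is $\ell|N/M|\ZZ\times\ell\ZZ^{n-1}$, so $(\ell N\cap M)/\ell M\simeq \bigl(\ell|N/M|/\mathrm{lcm}(\ell,|N/M|)\bigr)\ZZ/\ell|N/M|\ZZ$, a cyclic group of order $\ell|N/M|/\mathrm{lcm}(\ell,|N/M|)=\gcd(\ell,|N/M|)=(\ell,|N/M|)$. This gives the asserted rank $(\ell,|N/M|)$ in the simple case, hence $\prod_i(\ell,|N_i/M_i|)$ in general. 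Finally, $f_{MN}$ is an isomorphism iff the rank is $1$ iff $(\ell,|N/M|)=1$ for every $i$, i.e.\ $(\ell,|N/M|)=1$ — recovering and refining Lemma~\ref{lemma:lattices}.

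\textbf{Main obstacle.} The genuinely delicate point is the reduction in the first paragraph: one must be sure that $f_{MN}$ acts as the identity on the PBW root-vector monomials ${\bf F}^A$ and ${\bf E}^C$ and only "does something" on the toral part, so that freeness of the whole algebra over the image is equivalent to freeness of the toral parts — this is exactly the content of the displayed identity $f_{MN}({\bf F}^AK_\lambda{\bf E}^C)={\bf F}^Af_{MN}(K_\lambda){\bf E}^C$ in \S\ref{subsec:red_basis}, combined with the fact that the $Z_0$-module bases for the two isogeny types can be taken compatible (same reduced word for $w_0$, so same root vectors). Once that bookkeeping is in place, everything else is a short lattice computation plus the elementary fact that a group algebra over $\CC$ is free over the subalgebra spanned by a subgroup.
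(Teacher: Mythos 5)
Your proof is correct and follows essentially the same route as the paper: reduce to $\g$ simple, use the PBW identity from \S\ref{subsec:red_basis} to pass to the toral part $\CC[N/\ell N]$ over the image of $\CC[M/\ell M]$, then compute in the explicit coordinates supplied by Lemma~\ref{lem:AnE6}. The only cosmetic difference is that you compute $|\ker k_{MN}|$ and make explicit the freeness of a group algebra over the group algebra of a subgroup, whereas the paper writes down an explicit spanning set of the image of $f_{MN}$ and leaves the freeness implicit.
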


\begin{proof}
It is enough to prove the statement for $\g$ simple of rank $n$. Consider the group homomorphism $k_{MN}:M/\ell M\rightarrow N/\ell N$ and note that $\dim k_{MN}(\CC[M/\ell M])$
equals the order of the image of $N/\ell M\rightarrow M/\ell M$. By Lemma \ref{lemma:lattices}, when $k_{MN}$ is not an isomorphism, then $d:=(\ell, |N/M|)\neq 1$ and $\g$ is of type $A_n$ or $E_6$. 
Moreover in this case, by Lemma \ref{lem:AnE6}, $k_{MN}$ is the endomorphism $Z_{\ell}^{n}\rightarrow Z_{\ell}^{n}$ which restricts to the identity on the last $n-1$ $Z_\ell$-factors and on 
the first factor restricts to $x\mapsto |N/M|x$. Then, a basis of $f_{MN}(U_{\iota^{\ast}\eta}^{M}(\g))$ is given by  ${\bf F}^{A}K_{|\Lambda/M|\lambda_\Lambda}^{db}K_{\alpha_1}^{b_1}\cdots K_{\alpha_{n-1}}^{b_{n-1}}{\bf E}^{C}$, where 
$b\in {0,\ldots, \frac{\ell}{d}-1}$, $b_i\in {0,\ldots,{\ell}-1}$, and $A,C$ are as in \S \ref{subsec:red_basis}.
%
%The kernel of the restriction to the first factor consists of all elements of order dividing $|N/M|$, 
%i.e. all elements of order dividing $d:=(\ell,|N/M|)$. Hence, by Lemma \ref{lemma:lattices} in all cases $\dim f_{MN}(U_{\iota^{\ast}\eta}^{M}(\h))=\dim k_{MN}(\CC[M/\ell M])=\ell^{n}/d$. 
%It follows that $f_{MN}(U_{\iota^{\ast}\eta}^{M}(\g))$ is a subalgebra of $U_{\eta}^{N}(\g)$ of dimension $\ell^{\dim\g}/d$. In particular, if $d=1$ then $\dim f_{MN}(U_{\iota^{\ast}\eta}^{M}(\g))=\dim U_{\eta}^{N}(\g)$ and $f_{MN}$ is an isomorphism of algebras.
%
%If $d\neq 1$ and $\ell=d\ell'$, the image of the restriction of $k_{MN}$ to the first factor is represented by $d\{0,\dots,\ell'-1\}$ hence if $0<x<d$ then 
%$K_{\lambda_{N}}^{x}\notin f_{MN}(U_{\iota^{\ast}\eta}^{M}(\g))$. There are $d-1$ such elements and all normalize the lines through basis vectors of  $U_{\eta}^{N}(\g)$, hence $U_{\eta}^N(\g)$ is free of rank $d$.
\end{proof}

%------------------------------------------------------------------------
%
% lattices
%
%------------------------------------------------------------------------

%A direct consequence of the proof of the theorem is the following corollary.
%
%\begin{corollary}
%If $d=(\ell,|N/M|)\neq 1$, i.e. if $\g$ is simple of type $A_n$ or $E_6$, then $U_{\eta}^{N}(\g)$ is a free $f_{MN}(U_{\iota^{\ast}(\eta)}^{M}(\g))$ module with basis $\{K_{\lambda_{N}}^{i}\:0\leq i< d\}$ where $\lambda_M$ is as in Lemma \ref{lem:AnE6}.
%\end{corollary}
%comgio{Question: can it happen that the two algebras are isomorphic, even if $f_{MN}$ is not an iso? The counterexample below says that in that case they are not isomorphic, but what can we say in general?}

Theorem \ref{thm:iso} and the discussion in \S\ref{subsec:DCKPmap} give the following answer to Questions \eqref{Q1} and \eqref{Q3}. 

\begin{corollary}
\label{cor:l_prime}
Let $Q\subseteq M\subseteq N \subseteq \Lambda$ and let $\eta\in\Spec (Z_0^N(\g))$. If $(\ell,|N/M|)=1$ then
\begin{enumerate}[(a)]
\item the DCKP conjecture holds for $U_\eta^N(\g)$ if and only if it holds for $U_{\iota^{\ast}(\eta)}^M(\g)$; 
\item there exists a small $U_\eta^N(\g)$-module if and only if there exists a small $U_{\iota^{\ast}(\eta)}^M(\g)$-module.
%\item the quantum analogue of Humphreys conjecture holds  for $U_\eta^N(\g)$ if and only if it holds  for $U_{\iota^{\ast}(\eta)}^M(\g)$.
\end{enumerate}
\end{corollary}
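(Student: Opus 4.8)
The plan is to derive Corollary~\ref{cor:l_prime} as a direct combination of Theorem~\ref{thm:iso} with the two dictionaries already established in the excerpt: the correspondence between $\ell$-characters of $U_\ve^M(\g)$ and $U_\ve^N(\g)$ via $\iota^*$, and the geometric fact (recorded at the end of \S\ref{subsec:DCKPmap}) that $\dim\calO_{\pi_M(\iota^*\eta)}^{G_M}=\dim\calO_{\pi_N(\eta)}^{G_N}$, since $\phi_{NM}$ is a central isogeny. Set $\eta_M:=\iota^*(\eta)$, $g_M:=\pi_M(\eta_M)$, $g_N:=\pi_N(\eta)$, and write $d:=\tfrac12\dim\calO_{g_N}^{G_N}=\tfrac12\dim\calO_{g_M}^{G_M}$; so a small module means a module of dimension $\ell^d$ for either algebra, and the DCKP conjecture is the statement that $\ell^d\mid\dim V$ for every irreducible $V$.

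First I would invoke the hypothesis $(\ell,|N/M|)=1$ together with Theorem~\ref{thm:iso} to conclude that $f_{MN}\colon U_{\eta_M}^M(\g)\to U_{\eta}^N(\g)$ is an algebra isomorphism (the rank $\prod_i(\ell,|N_i/M_i|)$ equals $1$ precisely under this coprimality assumption). An algebra isomorphism induces a bijection $\Spec(U_{\eta}^N(\g))\to\Spec(U_{\eta_M}^M(\g))$ that preserves dimensions of modules. From this both parts follow immediately: part~(b) is the observation that this bijection matches up modules of dimension $\ell^d$ on the two sides, i.e. small $U_\eta^N(\g)$-modules correspond bijectively to small $U_{\eta_M}^M(\g)$-modules; and part~(a) is the observation that ``$\ell^d$ divides the dimension of every irreducible module'' is a property preserved under the same bijection, so it holds for $U_\eta^N(\g)$ iff it holds for $U_{\eta_M}^M(\g)$.

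The one point that deserves a sentence of care is that the statement of Question~\eqref{Q2} (and hence of ``small module'') is phrased in terms of $\pi_M(\eta)$ and the integer $\tfrac12\dim\calO_{\pi_M(\eta)}$ attached to each lattice separately, so I must make explicit that these exponents agree across $M$ and $N$; that is exactly the remark after Conjecture~\ref{conj:DCKP}, using commutativity of the diagram in \S\ref{subsec:DCKPmap} and centrality of $\phi_{NM}$. Once that alignment of exponents is in place, the reduction is purely formal. There is essentially no obstacle here: the substance has been front-loaded into Theorem~\ref{thm:iso}, and the corollary is just the extraction of its representation-theoretic consequences, so the ``hard part'' is merely bookkeeping --- checking that an algebra isomorphism transports both the existence-of-a-module-of-prescribed-dimension statement and the divisibility-for-all-irreducibles statement, which it plainly does.
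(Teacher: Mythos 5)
Your argument is exactly the paper's: the corollary is stated there as an immediate consequence of Theorem~\ref{thm:iso} together with the remark after Conjecture~\ref{conj:DCKP} that $\dim\calO_{\pi_M(\iota^*\eta)}^{G_M}=\dim\calO_{\pi_N(\eta)}^{G_N}$, and you have reconstructed precisely that reduction, including the necessary check that the exponents $\tfrac12\dim\calO$ agree across the two lattices. Nothing is missing and no alternative route is taken.
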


\newcounter{tabla}\stepcounter{tabla}
\renewcommand{\thetabla}{\Roman{tabla}}

 Let $b(\g)$ be the maximum between the largest bad prime for $\g$ and the maximum $m$ such that $\Delta$ contains a subset of type $A_{m-1}$.  The values of $b(\g)$ for $\g$ simple are listed in \ref{tab:bg}
\begin{align}\label{tab:bg}\tag*{Table \thetabla}
\begin{tabular}{|c|c|c|c|c|c|c|c|c|}
\hline 
$A_n$  & $B_n$ & $C_n$ &  $D_n$  & $E_6$ & $E_7$ & $E_8$  & $F_4$ & $G_2$  \\
\hline
$n+1$  & $n$ & $n$ &  $n$  & $6$ & $7$ & $8$  & $3$ & $3$  \\
\hline
\end{tabular} 
 \end{align}
 
\begin{remark}
The strategy proposed in \cite{sev1}, \cite{sev2} aims at settling the DCKP-conjecture for $(\ell, n!)=1$ if $\g$ is of type $A_n$ and  $(\ell,b(\g)!)=1$ otherwise, in the case of $M=\Lambda$. So, if $(\ell, b(\g)!)=1$,  Corollary \ref{cor:l_prime} together with this result would imply the DCKP conjecture for every lattice $M$. 
\end{remark}

\section{Some positive answers to Question 2}
%\section{Reduction statements}% to Levi subgroups}
\label{sec:positive}

In this section we apply an inductive argument on the rank of $\g$ in order to give affirmative answers to Question \ref{Q2}, under certain coprimality assumptions on $\ell$. With notation explained in the sequel, 
there are two main parts in the argument: a reduction to $U_{\eta}^{N}(\ll)$ for some Levi subalgebra $\ll$ of $\g$ as in \cite{gio-almeria} and a further reduction to a subalgebra determined by $[\ll,\ll]$ for which the coprimality 
condition is needed. By parabolic induction, the problem of determining the existence of small modules is reduced to rigid orbits, see Remark \ref{rem:rigid}, hence the existence of small modules is settled for $\mathfrak{sl}_{n+1}$
when $(\ell, (n+1)!)=1$.

\subsection{}
\label{par:compatible}
For $\Pi\subset\Delta$, let  $Q_{\Pi}=\ZZ\Pi$, $\Phi_{\Pi}=Q_{\Pi}\cap\Phi$, $\Phi_{\Pi}^{+}=Q_{\Pi}\cap\Phi^{+}$. We denote the weight lattice of $\Phi_\Pi$ by $\Lambda_{\Pi}$ and the longest element of the corresponding 
Weyl group by $w_0^{\Pi}$. Let $\p$ be the associated standard parabolic subalgebra of $\g$ with standard Levi factor $\mathfrak{l}$.
For a lattice $N$ between $Q$ and $\Lambda$, if $P$ and $L$ are the connected subgroups of $G_N$ with $\Lie(P)=\p$ and $\Lie(L)=\ll$, then $P=L U_{P}$ for a connected unipotent subgroup $U_{P}\subseteq U$. We set $U_P^-:=w_0 U_P w_0^{-1}$.

Let $U_\ve^N(\mathfrak{p})$ be the subalgebra of $U_\ve^N(\mathfrak{g})$ generated by $F_{\alpha},K_{\gamma},E_{\beta}$ for $\alpha\in \Pi$, $\beta\in \Delta$, $\gamma\in N$, 
let $U_\ve^N(\mathfrak{l})$ be the subalgebra of  $U_\ve^N(\mathfrak{p})$ 
generated by $F_{\alpha},K_{\gamma},E_{\beta}$, $\alpha,\beta\in \Pi,\gamma\in N$.

When dealing with such subalgebras, we always assume that the reduced  decomposition of $w_0$ 
is such that the first $|\Phi_\Pi^+|$ terms form a reduced decomposition of $w_0^{\Pi}$. 
This way, the root vectors corresponding to roots in $\Phi_\Pi^+$ will be contained in 
$U_\ve^N(\mathfrak{l})$. 

For $\eta\in \spec(Z_0^N(\mathfrak{g}))$, let 
$$U_\eta^N(\mathfrak{p})=U_\ve^N(\mathfrak{p})/(({\rm Ker}\eta)U_\ve^N(\mathfrak{g})\cap U_\ve^N(\mathfrak{p})) \mbox{ and } U_\eta^N(\mathfrak{l})=U_\ve^N(\mathfrak{l})/(({\rm Ker}\eta)U_\ve^N(\mathfrak{g})\cap U_\ve^N(\mathfrak{l})).$$
By construction,  $U_\eta^N(\mathfrak{p})$ and $U_\eta^N(\mathfrak{l})$ are subalgebras of $U_\eta^N(\mathfrak{g})$ generated, respectively by  $F_{\alpha},K_{\gamma},E_{\beta}$ for $\alpha\in \Pi$,
$\beta\in \Delta$, $\gamma\in N$, and  $F_{\alpha},K_{\gamma},E_{\beta}$, $\alpha,\beta\in \Pi,\gamma\in N$. Note that $U_\eta^N(\mathfrak{l})$ depends only on the restriction of 
$\eta$ to the subalgebra of $Z_0^N(\g)$ generated by $F^\ell_{\alpha},K^\ell_{\gamma},E^\ell_{\beta}$, $\alpha,\beta\in \Phi^+_\Pi,\gamma\in N$.

%be the subalgebra of $U_\eta^N(\mathfrak{g})$ generated by $\calB_{\p}$ %. %We have an induction map
%\begin{equation}
%\label{p_to_g}
%\Spec(U_\eta^N(\pp))\to \Rep( U_\eta^N(\g))
%,\quad
%V\mapsto U_\eta^N(\g)\otimes_{U_\eta^N(\pp)}V.

%when $\Phi$ is simply laced; $d(\Pi')=2$ when $\Phi$ is double laced and $\Phi_{\Pi'}$ simply laced; $d(\Pi')=3$ when the type of $\g$ is $G_2$ and $\Pi$ contains only the long root.

% consider a simple factor $\tilde \g$ of $[\ll,\ll]$ which is of type $A_m$ and which corresponds to long roots $\tilde\Pi\subseteq\Pi$. From the presentation of the algebras we see that $U_{\ve}^{N}(\ll)$ 
%contains $U_{\ve^{d}}^{Q_{\Pi}}([\ll,\ll])$ as subalgebra where $d=2$ when $\g$ is not of type $G_2$ or $d=3$ when the type of $\g$ is $G_2$ and $\Pi$ contains only the long root. With the assumptions in the 
%introduction $(\ell,d)=1$ so $\ve^{d}$ is a primitive $\ell$-th root of $1$. Moreover, the restriction of $\pi_N$ to the subalgebra $U_{\ve^{d}}^{Q_{\Pi}}([\ll,\ll])$ is by definition $\pi_{Q_{\Pi}}$ \cite{DCKP}.

\subsection{}
\label{par:levi_to_parabolic}
%For $\eta\in \spec(Z_0^N(\mathfrak{g}))$, let $U_\eta^N(\mathfrak{p})$ be the subalgebra of $U_\eta^N(\mathfrak{g})$ generated by $\calB_{\p}$ %. %We have an induction map
%\begin{equation}
%\label{p_to_g}
%\Spec(U_\eta^N(\pp))\to \Rep( U_\eta^N(\g))
%,\quad
%V\mapsto U_\eta^N(\g)\otimes_{U_\eta^N(\pp)}V.
%\end{equation}
%and let $U_\eta^N(\mathfrak{l})$ be the subalgebra generated by $\calB_{\ll}$.
If $\eta$ is such that $\eta(E_\alpha^\ell)=0$ for every $\alpha\in\Phi^+\setminus \Phi_\Pi$ then, extending trivially the action of $\{E_\alpha:\alpha\in\Phi^+\setminus \Phi_\Pi\}$ induces a natural map 
\begin{equation}
\label{l_to_p}
\Rep(U_\eta^{N}(\ll))\rightarrow \Rep(U_\eta^N(\pp)).
\end{equation}
If $V\in \Spec( U_\eta^N(\pp))$, then $E_\alpha V=0$ for every $\alpha\in\Phi^+\setminus \Phi_\Pi$. 
 Indeed, if $I$ is the left ideal of $U_\eta^N(\pp)$ generated by $\{E_\alpha:\alpha\in\Phi^+\setminus \Phi_\Pi\}$ then $IV$ is a proper submodule of $V$. Therefore, the map \eqref{l_to_p} 
restricts to a bijection $\Spec(U_\eta^{N}(\ll))\rightarrow \Spec(U_\eta^N(\pp))$.

The composition of the map \eqref{l_to_p} with extension of scalars to $U_{\eta}^{N}(\g)$, i.e., with $V\mapsto U_\eta^N(\g)\otimes_{U_\eta^N(\pp)}V$, is the induction map defined in \cite[\S2.1]{gio-almeria}
$$
\Ind_{\ll}^{\g,\eta}:\Spec(U_\eta^{N}(\ll))\to \Rep( U_\eta^N(\g)).
$$

\subsection{}
\label{par:levi_to_g}
We make use of the generalization of Lusztig-Spaltenstein induction \cite{lusp} to arbitrary elements described in \cite{gio-espo}. 
If $\calO_x^L$ is a conjugacy class in a Levi subgroup $L$ of a parabolic subgroup $P$, with decomposition $P=U_P^-L$, then
$\Ind_{L}^{G}(\calO_{x}^{L})$ is the unique conjugacy class in $G$ intersecting $U_P^- x$ in a dense subset. A conjugacy class which is not induced from a class in any proper Levi subgroup is called rigid. 
For $g\in G_{N}$ the semisimple and unipotent factors in the Jordan decoposition are denoted by $g_s$ and $g_u$ respectively.

% and is called the \emph{Levi-envelope} of $C_{G}(g_s)^{\circ}$ \cite{..}.
%If $L$ and $P$ are as in \S\ref{par:compatible}

%Let $L\subseteq G_N$ be the Levi subgroup of the standard parabolic subgroup $P=L\ltimes U_{\p}$ corresponding to $\Pi$ and let ${\rm Ind}_L^{G_N}$ denote induction of conjugacy classes from $L$ to $G_{N}$. Then $g'\in L$ and if $\calO_v$
%intersects $U_{\p}^{-}$ in a dense set then $g=\Ind_{L}^{{G_N}}(g')$ by definition. 

For $\eta\in \spec(Z_0^N(\g))$, let  $\eta_{\ll}\in \spec(Z_0^N(\g))$ be defined as
$\eta_{\ll}(x)=\eta(x)$ for every $x\in Z_0^N(\g)\cap U_\ve^N(\ll)$ and $0$ elsewhere. By our choice of reduced decomposition of $w_0$, the ordering in $\Phi^+$ begins with the roots in $\Phi^+_\Pi$ and the ordering in $-\Phi^+$ ends with the roots in $-\Phi^+_\Pi$. By the identification in \S \ref{subsec:poisson} and the definition of $\pi_N$ in  \S \ref{subsec:DCKPmap} we see that $\pi_N(\eta_{\ll})\in L$. 

% From the discussion so far, the following lemma follows.
%\comgio{insert that $g\in B^-$ and correct part 3}
\begin{lemma}
\label{lem:g_if_l}
For $\eta\in\Spec(U_{\ve}^{N}(\g))$ and $\Pi\subseteq\Delta$ let $L$ be the standard Levi subgroup corresponding to $\Pi$, $\ll=\Lie(L)$. Assume $g=\pi_N(\eta)\in B_{N}^{-}$ and 
% be its Lie algebra. % and let $\eta_{\ll}$ be the restriction of $\eta$ to $U_{\ve}^N(\ll)$. 
%Suppose that $\calO_{g}^{G_N}=\Ind_{L}^{G_N}(\calO_{\pi_N(\eta_{\ll})}^{L})$. 
 that $\calO_{g}^{G_N}=\Ind_{L}^{G_N}(\calO_{g'}^{L})$ where  $g'=\pi_{N}(\eta_\ll)$. 

\begin{enumerate}[(a)]
%\item \label{lem:g_if_l0} Then $\calO_{g'}^{L}=\calO_{\pi_{N}(\eta_{\ll})}^{L}$.
\item \label{lem:g_if_l1} If there exists an $U_{\eta}^{N}(\ll)$-module of dimension $\ell^{\frac{1}{2}\dim \calO_{g'}^{L}}$ then there exists a small module for $U_{\eta}^{N}(\g)$.
%$U_{\eta}^{N}(\g)$-module of dimension $\ell^{\frac{1}{2}\dim \calO_{g}^{G_N}}$. 
\item \label{lem:g_if_l2} Assume in addition that $L=C_{G_N}(Z(C_{G}(g_s)^{\circ})^{\circ})$.
If there exists a $U_{\eta}^{N}(\g)$-module of dimension $\ell^{\frac{1}{2}\dim \calO_{g}^{G_N}}$ then there exists a  $U_{\eta}^{N}(\ll)$-module of dimension $\ell^{\frac{1}{2}\dim \calO_{g'}^{L}}$ and one is irreducible if and only if the other is so. 
%the reversed implication in \eqref{lem:g_if_l1} also holds.
\end{enumerate}
\end{lemma}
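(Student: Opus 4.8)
The plan is to exploit the induction and restriction maps from \S\ref{par:levi_to_parabolic}, together with the dimension bookkeeping already built into the setup, and to match $\ell$-powers via the fact that $\Ind_L^{G_N}$ preserves codimension. First I would record the basic numerical identity: since $\calO_g^{G_N}=\Ind_L^{G_N}(\calO_{g'}^L)$ and Lusztig--Spaltenstein induction preserves the codimension of the orbit (the induced class meets $U_P^-\, g'$ densely, so $\dim\calO_g^{G_N}=\dim\calO_{g'}^L+2\dim U_P$), we get
$$
\tfrac12\dim\calO_g^{G_N}=\tfrac12\dim\calO_{g'}^L+\dim U_P=\tfrac12\dim\calO_{g'}^L+|\Phi^+\setminus\Phi_\Pi^+|.
$$
This is the bridge between a small $U_\eta^N(\ll)$-module and a small $U_\eta^N(\g)$-module.

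For part \eqref{lem:g_if_l1}: starting from a $U_\eta^N(\ll)$-module $V$ of dimension $\ell^{\frac12\dim\calO_{g'}^L}$, I first need $\eta(E_\alpha^\ell)=0$ for $\alpha\in\Phi^+\setminus\Phi_\Pi^+$ so that the map \eqref{l_to_p} applies; this follows because $g=\pi_N(\eta)\in B_N^-$, which by the identification in \S\ref{subsec:poisson} forces exactly these vanishings (as in the argument of \S\ref{subsec:automorphisms}'s surrounding paragraph). One subtlety: the statement's $\eta$ need not literally restrict to $\eta_\ll$ on the relevant subalgebra, but $U_\eta^N(\ll)$ depends only on $\eta$ restricted to the subalgebra generated by $F_\alpha^\ell,K_\gamma^\ell,E_\beta^\ell$ for $\alpha,\beta\in\Phi_\Pi^+$, and on this subalgebra $\eta$ and $\eta_\ll$ agree by definition of $\eta_\ll$; so a $U_\eta^N(\ll)$-module is the same thing as a $U_{\eta_\ll}^N(\ll)$-module. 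Then I inflate $V$ to $U_\eta^N(\pp)$ (letting $E_\alpha$, $\alpha\in\Phi^+\setminus\Phi_\Pi^+$, act by zero) and form $W:=\Ind_\ll^{\g,\eta}V=U_\eta^N(\g)\otimes_{U_\eta^N(\pp)}V$. Using the triangular PBW decomposition of $U_\eta^N(\g)$ over $U_\eta^N(\pp)$ — freeness with basis the monomials ${\bf F}^A$ in the root vectors $F_\alpha$, $\alpha\in\Phi^+\setminus\Phi_\Pi^+$, each exponent in $\{0,\dots,\ell-1\}$ — I get $\dim W=\ell^{|\Phi^+\setminus\Phi_\Pi^+|}\dim V=\ell^{\frac12\dim\calO_{g'}^L+|\Phi^+\setminus\Phi_\Pi^+|}=\ell^{\frac12\dim\calO_g^{G_N}}$, which is a small module. (I should double-check that $\pi_N(\eta)=g$, not merely something in $\calO_g$, is consistent with the construction; by the last paragraph of \S\ref{subsec:automorphisms}'s subsection we may in any case replace $\eta$ by any point of $(\pi_N)^{-1}(\calO_g)$ without changing the isomorphism type of $U_\eta^N(\g)$, and similarly for $\ll$, so the hypothesis is harmless.)

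For part \eqref{lem:g_if_l2}, with the extra hypothesis $L=C_{G_N}(Z(C_G(g_s)^\circ)^\circ)$, I would invoke the corresponding reduction result for reduced quantized enveloping algebras from \cite{gio-almeria}: under exactly this centralizer condition, induction $\Ind_\ll^{\g,\eta}$ together with the restriction functor set up an equivalence (or at least a dimension- and irreducibility-preserving bijection) between suitable module categories, so that every irreducible $U_\eta^N(\g)$-module of minimal dimension arises as $\Ind_\ll^{\g,\eta}$ of an irreducible $U_\eta^N(\ll)$-module, with dimensions related by the factor $\ell^{|\Phi^+\setminus\Phi_\Pi^+|}$ computed above. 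Concretely: given a $U_\eta^N(\g)$-module $V'$ of dimension $\ell^{\frac12\dim\calO_g^{G_N}}$, restrict along $U_\eta^N(\pp)\hookrightarrow U_\eta^N(\g)$, use that $E_\alpha V'=0$ for $\alpha\in\Phi^+\setminus\Phi_\Pi^+$ on any simple subquotient (as in the argument giving the bijection $\Spec U_\eta^N(\pp)\simeq\Spec U_\eta^N(\ll)$ in \S\ref{par:levi_to_parabolic}), and peel off a $U_\eta^N(\ll)$-module of dimension $\ell^{\frac12\dim\calO_{g'}^L}$; the centralizer hypothesis is what guarantees this restriction is "tight" enough that no dimension is lost and irreducibility is transported in both directions. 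The main obstacle — and the step I would spend the most care on — is precisely this last point: verifying that the centralizer condition $L=C_{G_N}(Z(C_G(g_s)^\circ)^\circ)$ is exactly what makes the adjunction between $\Ind_\ll^{\g,\eta}$ and restriction behave like an equivalence on the relevant blocks (so that smallness and irreducibility pass both ways), rather than merely giving an inequality of dimensions. I expect this to reduce to citing the precise statements of \cite{gio-almeria} about $\Ind_\ll^{\g,\eta}$, combined with the orbit-geometry input that under this hypothesis $\calO_{g'}^L$ is the "Richardson-type" source making $\Ind_L^{G_N}(\calO_{g'}^L)=\calO_g^{G_N}$ with the centralizers of $g$ and $g'$ matched up to the unipotent radical direction.
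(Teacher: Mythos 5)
Part (a) of your argument matches the paper's proof: the paper also forms $\Ind_\ll^{\g,\eta}V=U_\eta^N(\g)\otimes_{U_\eta^N(\pp)}V$, cites \cite[(2.2)]{gio-almeria} for $\dim\Ind_\ll^{\g,\eta}V=\ell^{\frac12|\Phi\setminus\Phi_\Pi|}\dim V$ (which you rederive from the PBW freeness over $U_\eta^N(\pp)$ — fine), and cites \cite[Theorem 1.3]{lusp} and \cite[Proposition 4.6]{gio-espo} for the codimension identity $\dim\Ind_L^{G_N}(\calO_{g'}^L)=|\Phi\setminus\Phi_\Pi|+\dim\calO_{g'}^L$. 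Your side remarks (why $\eta(E_\alpha^\ell)=0$ off $\Phi_\Pi$, why $U_\eta^N(\ll)\simeq U_{\eta_\ll}^N(\ll)$) are correct and are exactly the points the paper notes in \S\ref{par:levi_to_parabolic}--\ref{par:levi_to_g} and at the start of its proof.

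For part (b) there is a gap in your proposal, concentrated on the reference. You appeal to \cite{gio-almeria} for ``under exactly this centralizer condition, induction $\Ind_\ll^{\g,\eta}$ together with the restriction functor set up an equivalence.'' That result is not in \cite{gio-almeria}; that reference supplies the induction functor and the dimension formula used in (a), but not the bijection between $\Spec U_\eta^N(\ll)$ and $\Spec U_\eta^N(\g)$ under the hypothesis $L=C_{G_N}(Z(C_G(g_s)^\circ)^\circ)$. The paper's proof of (b) is a one-line citation of the main theorem of \cite[\S8]{DCK}, which is precisely the statement you suspected must exist: when $L$ is the minimal Levi containing $C_G(g_s)^\circ$, $\Ind_\ll^{\g,\eta}$ gives a bijection on simple modules with dimensions multiplied by $\ell^{\frac12|\Phi\setminus\Phi_\Pi|}$, so both the dimension equality and the ``irreducible iff irreducible'' claim drop out. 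Your restriction sketch is a plausible heuristic for how one might prove such an equivalence, but as you yourself note, you do not verify the tightness step; without invoking \cite[\S8]{DCK} (or reproving it), part (b) is not established. So the fix is simply to replace the appeal to \cite{gio-almeria} in (b) by the main theorem of \cite[\S8]{DCK} and delete the unfinished restriction argument.
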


\begin{proof}
%By \cite{DCK} we may assume that $g\in U^{-}$ and in particular 
%Since $g\in B_{N}^{-}$, we may use the induction map from \S\ref{par:levi_to_parabolic}. By \S\ref{par:compatible} we may assume that the PBW basis of $U_{\ve}^{N}(\g)$ is compatible with $\ll$.
%Then $g=v\pi_{N}(\eta_{\ll})$ for some $v\in U_P^{-}$ and where $\pi_{N}(\eta_{\ll})\in L$. Hence, if $\calO_g=\Ind_{L}^{G_N}(\calO_{g'}^{L})$ then $\calO_{g'}^{L}=\calO_{\pi_N(\eta_{\ll})}^{L}$.
%
Note that $U_{\eta}^{N}(\ll)\simeq U_{\eta_{\ll}}^{N}(\ll)$. If $V$ is an $U_{\eta}^{N}(\ll)$-module of dimension $\ell^{\frac{1}{2}\dim \calO_{g'}^{L}}$ then, by \cite[(2.2)]{gio-almeria}
%considering the PBW basis of $U_{\ve}^{N}(\g)$ compatible with $\ll$,
$$
\dim\Ind_{\ll}^{\g,\eta}V=\ell^{\frac{1}{2}|\Phi-\Phi_{\Pi}|}\dim V=\ell^{\frac{1}{2}(|\Phi-\Phi_{\Pi}|+ \dim \calO_{g'}^{L})}.
$$
By \cite[Theorem 1.3]{lusp}, \cite[Proposition 4.6]{gio-espo}, $\dim \Ind_{L}^{G_N}(\calO_{g'}^{L})=|\Phi-\Phi_{\Pi}|+\dim\calO_{g'}^{L}$ which proves (a).
%
%For the last claim,
% we may assume that the Jordan decomposition of $g$ is such that 
%$g_s\in T_N$ 
%
%and $g_u\in U^{-}$ \cite{DCK}. %, i.e. we may assume that $C_{G_N}(g_s)^{\circ}=L$. 
The main theorem in \cite[\S8]{DCK} applies when $L=C_{G}(Z(C_{G}(g_s)^{\circ})^{\circ})$, 
the minimal Levi subgroup containing $C_{G}(g_s)^{\circ}$ \cite[\S 3.1]{lusztig}  and the claim (b) follows.
\end{proof}

\begin{remark}
If the DCKP conjecture holds for the lattice $N$ and the integer $\ell$, 
the $U_\eta^N(\g)$-modules considered in the lemma are always irreducible.
% and conclusion.
\end{remark}

\subsection{}
\label{par:tensor}
The inductive argument that we are aiming for is for quantized enveloping algebras of semisimple Lie algebras. In order to make
this possible we want to pass from $U_{\eta}^{N}(\ll)$ to a product of quantized enveloping algebra corresponding to the simple factors of $[\ll,\ll]$, as suggested, in a special case, in \cite[Remark 8.1]{DCK}.

Let $[\ll,\ll]=\ll_1\oplus\cdots \oplus\ll_r$ be the decomposition of $[\ll,\ll]$ 
in simple factors, with $\Pi=\Pi_1\sqcup \cdots \sqcup \Pi_r$.
%$\Phi_{\Pi}=\Phi_1\sqcup \cdots \sqcup \Phi_r$ and
%$Q_{\Pi}=Q_{\Pi_1}\oplus\cdots \oplus Q_{\Pi_r}$,
We set $\ve_i:=\ve_\alpha$ if $(\alpha|\alpha)>2$ for all $\alpha\in\Pi_i$ and $\ve_i:=\ve$ otherwise. Then
for the $r$-tuple ${\boldsymbol{\epsilon}}$ consisting of the $\ve_i$'s, the subalgebra $U_{\boldsymbol{\epsilon}}^{Q_{\Pi}}([\ll,\ll])$ of  $U_\ve^Q(\mathfrak{l})$ generated by 
$F_{\alpha},K_{\gamma},E_{\beta}$ for $\alpha,\beta,\pm\gamma \in \Pi$ is isomorphic to $\bigotimes_{i=1}^r U_{\ve_i}^{Q_{\Pi_i}}(\ll_i)$.

It follows from the construction that for our choice of a reduced decompostion of $w_0$ and $w_0^\Pi$, the root vectors of each  $U_{\ve_i}^{Q_{\Pi_i}}(\ll_i)$ are exactly the $E_\alpha,F_\beta$ for $\alpha,\beta\in\Phi_{\Pi_i}^+$. 
Thus, $Z_0^Q(\g)\cap U_{\boldsymbol{\epsilon}}^{Q_{\Pi}}([\ll,\ll])$ is the tensor product of the $\ell$-centers of the $U_{\ve_i}^{Q_{\Pi_i}}(\ll_i)$. 

Now, let $U_{\boldsymbol{\eta}}^{Q_{\Pi}}([\ll,\ll]):=U_{\boldsymbol{\epsilon}}^{Q_{\Pi}}([\ll,\ll])/(({\rm Ker}\,\eta\, U_\ve^{Q}(\g))\cap U_{\eta}^{Q_{\Pi}}([\ll,\ll]))$. Then, for $\eta_i$ the restriction of $\eta$ 
    to the $\ell$-center of  $U_{\ve_i}^{Q_{\Pi_i}}(\ll_i)$, this algebra is the product of restricted quantized enveloping algebras, for possibly different primitive $\ell$-th roots of unity:
\begin{equation}\label{eq:decomposition}
U_{\boldsymbol{\eta}}^{Q_{\Pi}}([\ll,\ll])\simeq \otimes_{i=1}^r U_{\ve_i}^{Q_{\Pi_i}}(\ll_i)/(\ker\eta_i)U_{\ve_i}^{Q_{\Pi_i}}(\ll_i).
\end{equation}
For the subset of simple roots $\Pi\subset \Delta$ associated to $\ll$, let $N_\perp^\Pi:=N\cap \Pi^\perp$, where $\Pi^\perp$ is the orthogonal subgroup of $\Pi$ in $\Lambda$ with respect to the natural pairing. Let $K_N^\Pi$ be the central subalgebra of  $U_\eta^N(\mathfrak{l})$ generated by $K_\mu$ for $\mu\in N^\Pi_\perp$.

%We denote by $A_\Pi$ the Cartan matrix corresponding to $\Phi_{\Pi}$. Recall that $\det(A_\Pi)=|\Lambda_\Pi/Q_\Pi|$. 
Identifying $\Pi$ and $\Delta\setminus \Pi$ with the set of corresponding 
indices parametrizing the simple roots, we see that a basis for $\Lambda_\perp^\Pi$ is given by $\{\lambda_i,\, i\not\in \Pi\}$. In general, 
$N_{\perp}^{\Pi}$ has rank $|\Delta-\Pi|$ and 
$K_{N}^{\Pi}\simeq{\CC}[Z_\ell^{|\Delta\setminus\Pi|}]$.

The following Lemma partially generalizes Theorem \ref{thm:iso}.

\begin{lemma}
\label{lem:tensor}
Let $Q\subseteq N\subseteq \Lambda$ and let $\ll$ be a standard Levi subalgebra corresponding to $\Pi\subseteq\Delta$. 
If $(\ell,|N/Q_{\Pi}\oplus N_{\perp}^{\Pi}|)=1$ then $U_{\eta}^{N}(\ll)\simeq U_{\boldsymbol{\eta}}^{Q_{\Pi}}([\ll,\ll])\otimes K_{N}^{\Pi}$.
%$U_{\eta}^{N}(\ll)\simeq U_{\eta}^{Q_{\Pi}}([\ll,\ll])\otimes K_{N}^{\Pi}$.
%In particular, if $(\ell,|\Lambda_{\Pi}/Q_{\Pi}|)=1$ then $U_{\eta}^{\Lambda}(\ll)\simeq U_{\eta}^{Q_{\Pi}}([\ll,\ll])\otimes K_{\Lambda}^{\Pi}$,
\end{lemma}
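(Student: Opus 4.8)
The plan is to prove the isomorphism by producing an explicit spanning set of the correct size, mimicking the proof of Theorem \ref{thm:iso} but keeping track of the extra toral directions $N_\perp^\Pi$. First I would fix a reduced decomposition of $w_0$ of the type stipulated in \S\ref{par:compatible}, so that the root vectors $E_\alpha, F_\alpha$ for $\alpha\in\Phi_\Pi^+$ lie in $U_{\boldsymbol\epsilon}^{Q_\Pi}([\ll,\ll])$, and similarly for the $\ve_i$-factors. Then $U_\eta^N(\ll)$ has a PBW-type basis consisting of monomials ${\bf F}_\Pi^A\,{\bf K}^B\,{\bf E}_\Pi^C$ where now ${\bf F}_\Pi,{\bf E}_\Pi$ range only over roots in $\Phi_\Pi^+$ (exponents in $\{0,\dots,\ell-1\}$), and ${\bf K}^B$ runs over a basis of the image of $\CC[N/\ell N]$ inside $U_\eta^N(\h)$; note $\dim U_\eta^N(\ll) = \ell^{|\Phi_\Pi|}\cdot |N/\ell N|$. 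On the other side, using \eqref{eq:decomposition}, $\dim U_{\boldsymbol\eta}^{Q_\Pi}([\ll,\ll]) = \ell^{|\Phi_\Pi|}$ and $\dim K_N^\Pi = \ell^{|\Delta\setminus\Pi|}$, so the target algebra has dimension $\ell^{|\Phi_\Pi|+|\Delta\setminus\Pi|}$; the content of the coprimality hypothesis is precisely that this matches $\dim U_\eta^N(\ll)$.

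Next I would check that the natural map is well-defined and is an algebra homomorphism. The subalgebras $U_{\boldsymbol\epsilon}^{Q_\Pi}([\ll,\ll])$ and $K_N^\Pi$ of $U_\ve^N(\g)$ commute: the generators $K_\mu$ with $\mu\in N_\perp^\Pi$ are central in $U_\ve^N(\ll)$ because $(\mu|\alpha)=0$ for all $\alpha\in\Pi$ by definition of $\Pi^\perp$, so they commute past all $E_\alpha, F_\alpha$ with $\alpha\in\Pi$ and past all $K_\gamma$. Hence multiplication gives an algebra map $U_{\boldsymbol\eta}^{Q_\Pi}([\ll,\ll])\otimes K_N^\Pi \to U_\eta^N(\ll)$; one has to pass to the reduced quotients, which is routine since $\eta$ restricted to the $\ell$-center of $U_{\boldsymbol\epsilon}^{Q_\Pi}([\ll,\ll])$ is the tensor product of the $\eta_i$, and on $K_N^\Pi$ the relevant $\ell$-central elements $K_\mu^\ell$ are sent to the scalars $\eta(K_\mu^\ell)$. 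Surjectivity is clear once we know the image contains all the $E_\alpha, F_\alpha$ ($\alpha\in\Pi$) — which it does, up to the scalar rescaling $\ve_i$ versus $\ve$ which is harmless at the level of generators — and enough $K$'s to generate $U_\eta^N(\h)\cap U_\eta^N(\ll)$.

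The crux is then a dimension count showing injectivity (equivalently, that the $K$'s we have suffice to generate the whole toral part). The lattice $Q_\Pi\oplus N_\perp^\Pi$ sits inside $N$ with finite index, and $\CC[(Q_\Pi\oplus N_\perp^\Pi)/\ell(Q_\Pi\oplus N_\perp^\Pi)] = \CC[Q_\Pi/\ell Q_\Pi]\otimes K_N^\Pi$. The image of this group algebra in $U_\eta^N(\h)$, i.e.\ in $\CC[N/\ell N]$ after the rescaling of \S\ref{subsec:red_basis}, has dimension equal to the order of the image of the map $N/\ell M' \to M'/\ell M'$ with $M'=Q_\Pi\oplus N_\perp^\Pi$; by Lemma \ref{lemma:lattices} (applied to $M'\subseteq N$) this map is an isomorphism exactly when $(\ell,|N/Q_\Pi\oplus N_\perp^\Pi|)=1$. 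Under that hypothesis the image has full dimension $|N/\ell N| = \ell^{n}$, and the $K_\alpha$ for $\alpha\in\Pi$ together with $K_\mu$ for $\mu\in N_\perp^\Pi$ span the toral part of $U_\eta^N(\ll)$; combined with the PBW basis in the $E_\Pi, F_\Pi$ directions this produces a spanning set for $U_\eta^N(\ll)$ of size $\ell^{|\Phi_\Pi|+|\Delta\setminus\Pi|} = \dim U_{\boldsymbol\eta}^{Q_\Pi}([\ll,\ll])\otimes K_N^\Pi$, forcing the surjection to be an isomorphism. The main obstacle is organizing the bookkeeping of the three lattices $Q_\Pi$, $N_\perp^\Pi$ and $N$ and verifying that $Q_\Pi/\ell Q_\Pi$ is exactly the part of $N/\ell N$ hit by $U_{\boldsymbol\eta}^{Q_\Pi}([\ll,\ll])$ while $N_\perp^\Pi/\ell N_\perp^\Pi$ is exactly the part hit by $K_N^\Pi$, with no overlap and no gap — this is where the coprimality assumption is used, via Lemma \ref{lemma:lattices}.
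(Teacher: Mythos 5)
Your overall strategy — send the product map $j\colon U_{\boldsymbol\eta}^{Q_\Pi}([\ll,\ll])\otimes K_N^\Pi\to U_\eta^N(\ll)$, establish surjectivity using the lattice map $Q_\Pi\oplus N_\perp^\Pi\to N$ and then conclude by a dimension count — is the same as the paper's (the paper simply writes out the Bezout argument that underlies Lemma~\ref{lemma:lattices} instead of invoking it by name). However, the execution contains a genuine error in the dimension count that leads you to misattribute the role of the coprimality hypothesis.

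You assert $\dim U_{\boldsymbol\eta}^{Q_\Pi}([\ll,\ll])=\ell^{|\Phi_\Pi|}$; in fact it is $\ell^{|\Phi_\Pi|+|\Pi|}$, since the reduced algebra attached to the root lattice $Q_\Pi$ (of rank $|\Pi|$) has a toral part of dimension $\ell^{|\Pi|}$. Consequently $\dim\bigl(U_{\boldsymbol\eta}^{Q_\Pi}([\ll,\ll])\otimes K_N^\Pi\bigr)=\ell^{|\Phi_\Pi|+|\Pi|+|\Delta\setminus\Pi|}=\ell^{|\Phi_\Pi|+n}$, which equals $\dim U_\eta^N(\ll)$ \emph{always}, with no hypothesis on $\ell$. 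Your claim that ``the content of the coprimality hypothesis is precisely that this matches $\dim U_\eta^N(\ll)$'' is therefore wrong both numerically (by your own numbers, $\ell^{|\Phi_\Pi|+|\Delta\setminus\Pi|}\ne\ell^{|\Phi_\Pi|+n}$ when $\Pi\ne\emptyset$) and conceptually (coprimality cannot affect dimensions that do not depend on it). The coprimality is used solely to make $j$ surjective, by ensuring that $k_{M'N}\colon M'/\ell M'\to N/\ell N$ with $M'=Q_\Pi\oplus N_\perp^\Pi$ is onto; this is exactly what Lemma~\ref{lemma:lattices} gives, and it is the content of the paper's explicit computation $\mu=b_1\ell\mu+b_2x+b_2y$.

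A related slip: you describe the crux as ``a dimension count showing injectivity (equivalently, that the $K$'s we have suffice to generate the whole toral part)''. That parenthetical describes surjectivity, not injectivity. The correct logic is: $j$ is surjective by the lattice argument, and since source and target have the same dimension (computed independently via PBW bases), $j$ is an isomorphism. With the corrected dimension for $U_{\boldsymbol\eta}^{Q_\Pi}([\ll,\ll])$ and the surjectivity/injectivity roles sorted out, your argument aligns with the paper's.
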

\begin{proof}
%We distinguish between the generators in $U_\eta^N(\mathfrak{l})$ and the generators in $U_\eta^{Q_\Pi}([\mathfrak{l},\mathfrak{l}])$ by labeling them with an upper index $N$ or $Q$. 
Consider the product map
$j\colon U_{\boldsymbol{\eta}}^{Q_{\Pi}}([\ll,\ll])\otimes K_{N}^{\Pi}\to U_\eta^N(\mathfrak{l})$.
% defined $j|_{U_\eta^{N}(\Pi)}=\id$ and $j(K^Q_{\mu})=K_\mu^N$ for $\mu\in N_\perp^{\Pi}$. 
%It is an algebra map because $K_{N}^{\Pi}$ is central in $U_\eta^N(\mathfrak{l})$.
%We distinguish between the generators in $U_\eta^N(\mathfrak{l})$ and the generators in $U_\eta^{Q_\Pi}([\mathfrak{l},\mathfrak{l}])$ by labeling them with an upper index $N$ or $Q$. Consider the map $j\colon U_\eta^{Q_\Pi}([\mathfrak{l},\mathfrak{l}])\otimes K_{N}^{\Pi}\to U_\eta^N(\mathfrak{l})$ defined by $j(E_\alpha^Q)=E^N_\alpha$, $j(F^Q_\alpha)=F^N_\alpha$, $j(K_\alpha^Q)=K_\alpha^N$ for $\alpha\in \Pi$ and $j(K^Q_{\mu})=K_\mu^N$ for $\mu\in N_\perp^{\Pi}$. It is an algebra map because $K_{N}^{\Pi}$ is central in $U_\eta^N(\mathfrak{l})$.
Let $M=Q_{\Pi}\oplus N_{\perp}^{\Pi}$ and $m=|N/M|$. For any $\mu\in N$, $m\mu=x+y$ with $x\in Q_{\Pi}$ and $y\in N_\perp^\Pi$. If $(\ell,m)=1$ then there are $b_1,b_2\in\ZZ$ such 
that $\mu=(b_1\ell+b_2m)\mu=b_1\ell\mu+b_2x+b_2y$. Hence
$$
K_{\mu}=\eta((K_{b_1\mu})^{\ell})K_{b_2x}K_{b_2y}\in j(U_{\boldsymbol\eta}^{Q_{\Pi}}([\ll,\ll])\otimes K_{N}^{\Pi})
$$
and $j$ is surjective. Using the PBW bases we see that $\dim U_{\boldsymbol{\eta}}^{Q_\Pi}([\mathfrak{l},\mathfrak{l}])\otimes K_{N}^{\Pi}=\dim U_\eta^N(\mathfrak{l})$, whence the statement.
%
%
%$$
%\dim U_{\boldsymbol{\eta}}^{Q_\Pi}([\mathfrak{l},\mathfrak{l}])\otimes K_{N}^{\Pi}\leq\ell^{\dim[\ll,\ll]+|\Delta-\Pi|}=\dim U_\eta^N(\mathfrak{l}).
%$$
%Therefore, if $j$ is surjective then the two algebras have the same dimension and $j$ is an isomorphism.
\end{proof}

We treat now the two special cases $N=\Lambda$ and $N=Q$.

\begin{lemma}Let  $\ll$ be a standard Levi subalgebra corresponding to $\Pi\subseteq\Delta$, let $A_{\Pi}$ be the Cartan matrix of $[\ll,\ll]$. Assume $(\ell, \det (A_\Pi))=1$. Then, 
\begin{equation}
%U_{\eta}^{\Lambda}(\ll)\simeq U_{\eta}^{N}(\Pi)\otimes K_{\Lambda}^{\Pi}\mbox{ and }U_{\eta}^{Q}(\ll)\simeq U_{\eta}^{N}(\Pi)\otimes K_{Q}^{\Pi}.
U_{\eta}^{\Lambda}(\ll)\simeq U_{\boldsymbol{\eta}}^{Q_{\Pi}}([\ll,\ll])\otimes K_{\Lambda}^{\Pi}\mbox{ and }U_{\eta}^{Q}(\ll)\simeq U_{\boldsymbol{\eta}}^{Q_{\Pi}}([\ll,\ll])\otimes K_{Q}^{\Pi}.
\end{equation}
\end{lemma}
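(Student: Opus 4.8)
The plan is to deduce both isomorphisms from Lemma \ref{lem:tensor} by checking, in each of the two cases $N=\Lambda$ and $N=Q$, that the relevant index is coprime to $\ell$. Recall that Lemma \ref{lem:tensor} gives $U_\eta^N(\ll)\simeq U_{\boldsymbol\eta}^{Q_\Pi}([\ll,\ll])\otimes K_N^\Pi$ as soon as $(\ell,|N/(Q_\Pi\oplus N_\perp^\Pi)|)=1$, so the entire content of the present lemma is the identification
$$
\bigl|N/(Q_\Pi\oplus N_\perp^\Pi)\bigr| \ \text{ divides (a power of) } \ \det(A_\Pi)
$$
for $N=\Lambda$ and for $N=Q$. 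First I would treat $N=Q$: here $Q_\Pi\oplus Q_\perp^\Pi\subseteq Q$, and I would compute $|Q/(Q_\Pi\oplus Q_\perp^\Pi)|$ by observing that $Q_\Pi$ is the root lattice of $[\ll,\ll]$ while $Q_\perp^\Pi$ is a finite-index sublattice of $Q\cap\Pi^\perp$; the quotient is controlled by the discriminant of the restriction of the form $(\_|\_)$ to $\mathbb{Z}\Pi$, which up to the (unit) short-root normalizations is exactly $\det(A_\Pi)$. Then for $N=\Lambda$ I would use that $\Lambda/Q$ has order $|\Lambda/Q|=\det(A)$ where $A$ is the Cartan matrix of $\g$, together with the tower $Q_\Pi\oplus \Lambda_\perp^\Pi\subseteq \Lambda$ and the fact, noted just before the lemma, that $\{\lambda_i: i\notin\Pi\}$ is a basis of $\Lambda_\perp^\Pi$; the index $|\Lambda/(Q_\Pi\oplus\Lambda_\perp^\Pi)|$ is then $|\Lambda_\Pi/Q_\Pi|=\det(A_\Pi)$, since quotienting $\Lambda$ by the span of the $\lambda_i$ with $i\notin\Pi$ leaves precisely the weight lattice of $[\ll,\ll]$ modulo its root lattice.

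Concretely, the key steps in order are: (1) fix a $\mathbb{Z}$-basis of $\Lambda$ of the form $\{\lambda_i : i\in\Pi\}\cup\{\lambda_i : i\notin\Pi\}$, so that $\Lambda_\perp^\Pi=\bigoplus_{i\notin\Pi}\mathbb{Z}\lambda_i$ and $\Lambda/\Lambda_\perp^\Pi\cong\bigoplus_{i\in\Pi}\mathbb{Z}\lambda_i=\Lambda_\Pi$; (2) observe $Q_\Pi$ maps into this quotient as the root lattice of $[\ll,\ll]$, giving $|\Lambda/(Q_\Pi\oplus\Lambda_\perp^\Pi)| = |\Lambda_\Pi/Q_\Pi| = |\det(A_\Pi)|$, hence coprimality to $\ell$ and the first isomorphism via Lemma \ref{lem:tensor}; (3) for $N=Q$, note $Q_\perp^\Pi = Q\cap\Pi^\perp$ has rank $|\Delta\setminus\Pi|$ and, after choosing compatible bases, $|Q/(Q_\Pi\oplus Q_\perp^\Pi)|$ divides $|\Lambda/(Q_\Pi\oplus\Lambda_\perp^\Pi)|\cdot|\Lambda_\perp^\Pi/Q_\perp^\Pi|$ times a factor bounded by $|\Lambda/Q|$; more cleanly, $|Q/(Q_\Pi\oplus Q_\perp^\Pi)|$ equals the order of the torsion of $Q/(Q_\Pi + Q_\perp^\Pi)$, which divides $\det(A_\Pi)$ because $Q_\Pi$ already has index $\det(A_\Pi)$ in $\Lambda_\Pi$ and $Q_\perp^\Pi$ is saturated in $Q\cap\Pi^\perp$; (4) apply Lemma \ref{lem:tensor} a second time with $N=Q$.

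The main obstacle I expect is step (3): unlike the $\Lambda$ case, where the basis of $\Lambda_\perp^\Pi$ by fundamental weights is handed to us explicitly before the lemma, the lattice $Q_\perp^\Pi = Q\cap\Pi^\perp$ does not come with an obvious basis, and one must argue that the ``mixing'' between the $\Pi$-directions and the $\Pi^\perp$-directions inside $Q$ contributes only a factor dividing $\det(A_\Pi)$ — in particular that no prime outside the support of $\det(A_\Pi)$ appears in $|Q/(Q_\Pi\oplus Q_\perp^\Pi)|$. The clean way to see this is to note that $Q_\Pi\oplus\Lambda_\perp^\Pi\supseteq Q_\Pi\oplus Q_\perp^\Pi$ and $\Lambda\supseteq Q$, fit both into the square of inclusions, and use multiplicativity of indices together with $|\Lambda_\perp^\Pi/Q_\perp^\Pi|$ being a divisor of $|\Lambda_\Pi/Q_\Pi|\cdot(\text{unit})$; since $\ell$ is odd and coprime to $\det(A_\Pi)$, every index in the diagram is a unit modulo $\ell$, so Lemma \ref{lem:tensor} applies. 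Alternatively one can invoke that $|Q/(Q_\Pi\oplus Q^\Pi_\perp)|$ divides $|\Lambda/(Q_\Pi\oplus\Lambda^\Pi_\perp)| = \det(A_\Pi)$ directly by comparing the two as orders of torsion subgroups of quotients of $Q\subseteq\Lambda$ by the same sublattice-type construction. Either route makes the proof a two-line application of the previous lemma once the index bookkeeping is in place.
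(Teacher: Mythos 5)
Your overall plan — reduce both cases to Lemma \ref{lem:tensor} by showing the relevant index is coprime to $\ell$ — is exactly the paper's. The $N=\Lambda$ case is fine and essentially equivalent to the paper's argument: the paper writes the change-of-basis matrix from $\{\lambda_\Pi$-weights, $\Pi$-roots$\}$ to a basis of $Q_\Pi\oplus\Lambda_\perp^\Pi$ and observes it is block-diagonal $\mathrm{diag}(A_\Pi, I)$, which is the same computation as your quotient by $\Lambda_\perp^\Pi$.

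The $N=Q$ case is where you have a genuine gap, and you correctly flag it but do not close it. Your first route introduces a factor bounded by $|\Lambda/Q|$, which can share prime factors with $\ell$ (e.g., $\g=\mathfrak{sl}_{pk}$ with $p\mid\ell$), so it proves nothing. Your second route asserts $|Q/(Q_\Pi\oplus Q_\perp^\Pi)|$ divides $\det(A_\Pi)$ ``because $Q_\Pi$ has index $\det(A_\Pi)$ in $\Lambda_\Pi$ and $Q_\perp^\Pi$ is saturated''; the saturation remark is tautological ($Q_\perp^\Pi$ is defined as $Q\cap\Pi^\perp$) and the index of $Q_\Pi$ in $\Lambda_\Pi$ does not, by itself, bound the quantity you need. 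The paper avoids this: it writes down a rational basis $\beta_1,\dots,\beta_{|\Delta\setminus\Pi|}$ of $\Pi^\perp$ with denominators controlled by $\det(D_\Pi A_\Pi)$, shows the \emph{exponent} of $Q/(Q_\Pi\oplus Q_\perp^\Pi)$ divides $\det(D_\Pi A_\Pi)$, and then uses the standing hypothesis that $\ell$ is coprime to the symmetrizers $d_i\in\{1,2,3\}$ together with $(\ell,\det A_\Pi)=1$. Note that the sharper divisibility you want is in fact true, but needs a real argument: under orthogonal projection $p\colon\mathbb{Q}\Delta\to\mathbb{Q}\Pi$ one has $Q/(Q_\Pi\oplus Q_\perp^\Pi)\cong p(Q)/Q_\Pi$, and $p(\alpha)=\sum_{\gamma\in\Pi}(\alpha|\gamma^\vee)\varpi_\gamma\in\Lambda_\Pi$ because the Cartan integers $(\alpha|\gamma^\vee)$ are integers, so $p(Q)\subseteq\Lambda_\Pi$ and the index divides $\det(A_\Pi)$. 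Had you spelled out the projection argument, you would have had a clean (and slightly sharper) alternative to the paper's; as written, step (3) is not a proof.
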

\begin{proof}
Recall that $\det (A_\Pi)=|\Lambda_\Pi/Q_{\Pi}|$. In order to show that $j$ as in Lemma \ref{lem:tensor} is an isomorphism when $N=\Lambda$, 
we observe that for two lattices $M\subseteq N$ of rank $n$ and $A_{NM}$ the matrix expressing $n$ generators of $M$ in terms of 
$n$ generators of $N$ we have $|N/M|=\det A_{NM}$. Let $M=Q_{\Pi}\oplus\Lambda^{\Pi}_{\perp}$ and $N=\Lambda$. Since a basis of $\Lambda^{\Pi}_{\perp}$ is a subset of the fundamental weights, 
after a suitable reordering of the vectors, we may assume that the matrix $A_{NM}$ is the block diagonal matrix ${\rm diag}(A_{\Pi}, I_{|\Delta\setminus\Pi|})$. In particular $\det A_{NM}=\det A_{\Pi}$.

We consider now the case $N=Q$. Let $V={\mathbb Q}\Delta$. We rearrange the basis $\Delta$ in such a way that the first elements are the simple roots in $\Pi$.
Let $DA$ be the symmetrized Cartan matrix of $\Phi$, let $D_\Pi$ be the square submatrix of $D$ 
consisting of its first $|\Pi|$ rows and columns, and let 
$A'=(D_\Pi A_\Pi\quad C)$ be the $|\Pi|\times |\Delta|$ submatrix of $DA$ consisting of the first $|\Pi|$ rows. 
The orthogonal subspace $V_\perp^\Pi$ to $\Pi$ in $V$ is given by those vectors whose coordinate columns $X$ with respect to the reordered basis $\Delta$ are solutions
to $A'X=0$. Mulitplying by $(D_\Pi A_\Pi)^{-1}$ on the left we have 
$(I_{|\Pi|}\quad (D_\Pi A_\Pi)^{-1}C)X=0$. For $B:=(D_\Pi A_\Pi)^{-1}C$, 
a ${\mathbb Q}$-basis for $V_\perp^\Pi$ is then given by the vectors of the form
$\beta_i:=-\sum_{j=1}^{|\Pi|}b_{ji}\alpha_j+\alpha_{|\Pi|+i}$, for $i=1,\ldots,|\Delta\setminus\Pi|$. Let $M:=\langle\Pi,\beta_j,\,j=1,\ldots,|\Delta\setminus\Pi|\rangle$.
Then, we have $Q_{\Pi}\oplus Q_{\perp}^{\Pi}\subset Q\subset M$ and 
$|Q/Q_{\Pi}\oplus Q_{\perp}^{\Pi}|$ divides $|M/Q_{\Pi}\oplus Q_{\perp}^{\Pi}|$. Now, the exponent of $M/Q_{\Pi}\oplus Q_{\perp}^{\Pi}$ divides 
$\det (D_\Pi A_\Pi)$ because $\det (D_\Pi A_\Pi)\beta_j\in Q_{\Pi}\oplus Q_{\perp}^{\Pi}$ for every $j$. Since $(\ell, \det (A_\Pi))=1$ and $(\ell, d_i)=1$, we have  $(\ell,|M/Q_{\Pi}\oplus Q_{\perp}^{\Pi}|)=1$, whence
$(\ell,|Q/Q_{\Pi}\oplus Q_{\perp}^{\Pi}|)=1$. Lemma \ref{lem:tensor} applies.
%it suffices to show that the exponent of $\Lambda/Q_{\Pi}\oplus\Lambda_{\perp}^{\Pi}$ divides $(\det A_{\Pi})$, 
%i.e. for every $\mu\in \Lambda$, we have $ (\det A_\Pi)\mu \in Q_\Pi+\Lambda_\perp^{\Pi}$.
%Indeed, for $j\in \Pi$, let $x_{j}\in{\mathbb Q}$ be the set of solutions of the linear system 
%\begin{equation}
%$$
%\sum_{j\in\Pi}x_j\langle \alpha_j,\alpha_i\rangle =\langle \mu,\alpha_i\rangle
%$$
%\end{equation}
%Then for $y_j:=(\det A_\Pi )x_j\in{\mathbb Z}$, for $j\in \Pi$ and for every $i\in \Pi$ we have 
%$$\langle (\det A_\Pi)\mu-\sum_{j\in\Pi}y_j \alpha_j, \alpha_i\rangle=0$$
%so $(\det A_\Pi)\mu-\sum_{j\in\Pi}y_j \alpha_j \in \Lambda_\perp^{\Pi}$.
\end{proof}

Let $Q\subseteq N\subseteq\Lambda$ be such that $(\ell,|N/Q_\Pi\oplus N_\perp^\Pi|)=1$ and let $\eta\in \Spec(Z_0^N(\g))$. An immediate consequence of the above lemma is the following equality of sets
$$
\{\dim V~|~V\in \Spec(U_\eta^N(\ll))\}=\{\dim V~|~V\in \Spec( U_{\boldsymbol{\eta}}^{Q_{\Pi}}([\ll,\ll]))\}.$$
%\{\dim V~|~V\in \Spec(U_\eta^N(\ll))\}=\{\dim V~|~V\in \Spec(U_\eta^{Q_\Pi}([\ll,\ll]))\}.
%
%$$

%\comgio{Until here}
%Assume in addition that $\pi_N(\eta)\in B_N^-$. Then, for any fixed character $\zeta$ of $K_N^{\Pi}$, we have an induction map 
%$$
%{\Ind}_{\Pi}^{N,\chi}:\Spec (U_{\eta}^{N}(\Pi))\to {\rm Rep}(U_\eta^{N}(\g))
%{\Ind}_{[\ll,\ll]}^{N,\zeta}:{\rm Rep} (U_{\boldsymbol{\eta}}^{Q_\Pi}([\ll,\ll]))\to {\rm Rep}(U_\eta^{N}(\g)).
%{\rm Ind}_{\Pi}^{M,\chi}:\Spec (U_\eta^{Q_\Pi}([\l,\l]))\to {\rm Rep}(U_\eta^{N}(\g))
%$$
%as in \cite{gio-almeria}. 
%It is defined as follows: for $V\in \Spec(U_{\boldsymbol{\eta}}^{Q_{\Pi}}([\ll,\ll]))$, extend scalars to $U_\eta^N(\ll)$ via $\zeta$ using Lemma \ref{lem:tensor} and compose with the induction map in \S\ref{par:levi_to_parabolic}.
%In other words, $\Ind_{\Pi}^{N,\chi}$ is the composition of the following maps
%$$
%\Spec (U_{\eta_\Pi}^{Q_\Pi}([\ll,\ll]))
%\to
%\Spec (U_{\eta_\Pi}^{N}(\ll))
%\to
%\Spec (U_{\eta_\Pi}^{N}(\p))
%\to
%{\Rep}(U_\eta^{N}(\g))
%$$
%Since the PBW-bases of $U_\eta^{Q_\Pi}([\ll,\ll])$ and $U_\eta^{N}(\g)$ are compatible we see that $\dim\Ind_{\Pi}^{N,\chi}V=\ell^{\frac{1}{2}|\Phi-\Phi_{\Pi}|}\dim V$. Note also that the last two maps do not depend on $\chi$.

\subsection{}
We are now in a position to prove the main statement. Recall that a unipotent conjugacy class $\calO$ is called Richardson if it is induced from the trivial class in some Levi subgroup of a parabolic subgroup. 
Note that for unipotent conjugacy classes this property of $\calO$ does not depend on the isogeny type of $G_N$. As $U^-$ does not depend on the isogeny type, 
if $\eta$ is such that $\pi_N(\eta)\in U^-$, 
then $\pi_M(\iota^*\eta)=\pi_N(\eta)$ for every $M\subset N$ and we  simply write 
$\pi_M(\eta)$.

\begin{lemma}\label{lem:second}
 Let $Q\subseteq N\subseteq\Lambda$, $\eta\in \Spec (Z_0^N(\g))$ and $g=\pi_{N}(\eta)\in B_N^-$. Assume that $(\ell,|N/Q_\Pi\oplus N_\perp^\Pi|)=1$ and that 
$L=C_{G_N}(g_s)^{\circ}$ is the standard Levi subgroup of a standard parabolic subgroup associated with $\Pi\subset \Delta$.
Let $g_u\in [L,L]=L_1\cdots L_r$ decompose as a product $g_u=h_1\cdots h_r$, so that $\pi_{Q_{\Pi_i}}(\eta_i)=h_i$.
If each $U_{\ve_i}^{Q_{\Pi_i}}(\ll_i)/(\ker\eta_i)U_{\ve_i}^{Q_{\Pi_i}}(\ll_i)$ has a module of dimension $\ell^{\frac{1}{2}\dim\calO_{h_i}^{L_i}}$, then 
$U_\eta^N(\g)$ has a small module. 
\end{lemma}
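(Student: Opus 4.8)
The plan is to assemble the pieces that have already been developed in this section. First I would reduce from $U_\eta^N(\g)$ to the Levi: since $g=\pi_N(\eta)\in B_N^-$ and $L=C_{G_N}(g_s)^\circ$ is a standard Levi subgroup of a standard parabolic, the hypothesis of Lemma~\ref{lem:g_if_l}\eqref{lem:g_if_l1} is in reach provided we know that $\calO_g^{G_N}=\Ind_L^{G_N}(\calO_{g'}^L)$ with $g'=\pi_N(\eta_\ll)$. The point here is that $g'$ has semisimple part $g_s$ (it is obtained from $\eta$ by killing the root coordinates outside $\Phi_\Pi$, which does not affect the torus part) and unipotent part $g_u$, so $\calO_{g'}^L=\calO_{g_u}^L\cdot g_s$ and $\calO_g^{G_N}=\calO_{g_u}^{G_N}\cdot g_s$; the equality of induced classes then follows from the classical fact (used already in Lemma~\ref{lem:g_if_l}, via \cite{lusztig} \S3.1) that for $L=C_{G_N}(g_s)^\circ$ the class $\calO_g^{G_N}$ is induced from $\calO_{g'}^L$. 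So Lemma~\ref{lem:g_if_l}\eqref{lem:g_if_l1} tells us it is enough to produce a $U_\eta^N(\ll)$-module of dimension $\ell^{\frac12\dim\calO_{g'}^L}$.

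Next I would pass from $U_\eta^N(\ll)$ to $U_{\boldsymbol\eta}^{Q_\Pi}([\ll,\ll])$. By Lemma~\ref{lem:tensor} the coprimality assumption $(\ell,|N/Q_\Pi\oplus N_\perp^\Pi|)=1$ gives $U_\eta^N(\ll)\simeq U_{\boldsymbol\eta}^{Q_\Pi}([\ll,\ll])\otimes K_N^\Pi$, and $K_N^\Pi\simeq\CC[Z_\ell^{|\Delta\setminus\Pi|}]$ is commutative; so the set of dimensions of simple $U_\eta^N(\ll)$-modules coincides with that of simple $U_{\boldsymbol\eta}^{Q_\Pi}([\ll,\ll])$-modules, and more concretely, tensoring a module of $U_{\boldsymbol\eta}^{Q_\Pi}([\ll,\ll])$ with a one-dimensional module of $K_N^\Pi$ produces a $U_\eta^N(\ll)$-module of the same dimension. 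By the decomposition \eqref{eq:decomposition}, $U_{\boldsymbol\eta}^{Q_\Pi}([\ll,\ll])\simeq\bigotimes_{i=1}^r U_{\ve_i}^{Q_{\Pi_i}}(\ll_i)/(\ker\eta_i)U_{\ve_i}^{Q_{\Pi_i}}(\ll_i)$, so tensoring together the given modules $V_i$ of dimension $\ell^{\frac12\dim\calO_{h_i}^{L_i}}$ yields a module of dimension $\ell^{\frac12\sum_i\dim\calO_{h_i}^{L_i}}$.

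It then remains to check that the exponent matches, i.e. that $\dim\calO_{g'}^L=\sum_{i=1}^r\dim\calO_{h_i}^{L_i}$. Since $g'=g_s g_u$ with $g_s$ central in $L$ (being in $Z(L)$, as $L=C_{G_N}(g_s)^\circ$), conjugation by $L$ fixes $g_s$ and acts on $g_u$, so $\dim\calO_{g'}^L=\dim\calO_{g_u}^L$. The adjoint group of $L$ is $L/Z(L)^\circ$, which is (isogenous to) $L_1\times\cdots\times L_r$ times a torus acting trivially; the image of $g_u$ in this product is $(h_1,\dots,h_r)$ because $g_u=h_1\cdots h_r\in[L,L]=L_1\cdots L_r$. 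Hence $\dim\calO_{g_u}^L=\sum_i\dim\calO_{h_i}^{L_i}$, using that centralizer dimensions add over the simple factors and that the central torus contributes nothing. Combining, $\bigotimes_i V_i$, viewed inside $U_{\boldsymbol\eta}^{Q_\Pi}([\ll,\ll])$ and then extended by a character of $K_N^\Pi$ to a $U_\eta^N(\ll)\simeq U_{\eta_\ll}^N(\ll)$-module, has the right dimension $\ell^{\frac12\dim\calO_{g'}^L}$, and Lemma~\ref{lem:g_if_l}\eqref{lem:g_if_l1} delivers the small $U_\eta^N(\g)$-module.

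The main obstacle I anticipate is bookkeeping rather than a deep difficulty: one must be careful that the three identifications --- $\eta\leftrightarrow\eta_\ll$ (so that $U_\eta^N(\ll)\simeq U_{\eta_\ll}^N(\ll)$ and $\pi_N(\eta_\ll)=g'\in L$ as noted in \S\ref{par:levi_to_g}), the isomorphism of Lemma~\ref{lem:tensor}, and the decomposition \eqref{eq:decomposition} into the $U_{\ve_i}^{Q_{\Pi_i}}(\ll_i)$ with their possibly different roots of unity $\ve_i$ --- are mutually compatible, in particular that the restriction $\eta_i$ of $\eta$ to the $i$-th $\ell$-center corresponds under $\pi_{Q_{\Pi_i}}$ exactly to $h_i$, which is part of the hypothesis. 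The other point needing a clean argument is the decomposition $\dim\calO_{g'}^L=\sum_i\dim\calO_{h_i}^{L_i}$ across the simple factors of $[\ll,\ll]$, together with the harmlessness of the central torus of $L$; this is standard for reductive groups but should be stated.
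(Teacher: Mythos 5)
Your proof is correct and follows essentially the same route as the paper's: build a module of the right dimension for $U_{\boldsymbol\eta}^{Q_\Pi}([\ll,\ll])$ from the tensor product of the $V_i$, transfer it to $U_\eta^N(\ll)$ via Lemma~\ref{lem:tensor}, and invoke Lemma~\ref{lem:g_if_l} to induce up to $U_\eta^N(\g)$. The paper's proof is much terser (it leaves implicit the dimension identity $\dim\calO_{g'}^L=\dim\calO_{g_u}^L=\sum_i\dim\calO_{h_i}^{L_i}$ and the verification that the induction hypothesis of Lemma~\ref{lem:g_if_l} holds when $L=C_{G_N}(g_s)^\circ$), and it cites part~(b) of Lemma~\ref{lem:g_if_l} rather than part~(a) --- the direction you correctly identified as the one actually needed; the extra hypothesis of~(b) is met here since $L=C_{G_N}(g_s)^\circ$ being a Levi forces $L=C_{G_N}(Z(L)^\circ)$, but it is part~(a) that produces the small $U_\eta^N(\g)$-module from the $U_\eta^N(\ll)$-module.
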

\begin{proof}
Under these assumptions, $U_{\boldsymbol{\eta}}^{Q_{\Pi}}([\ll,\ll])$ has a module of dimension $\ell^{\frac{1}{2}\dim\calO_{g_u}^{L}}$. Therefore, by Lemma \ref{lem:tensor}, $U_\eta^N(\mathfrak{l})$ has a module of the same dimension. 
Lemma \ref{lem:g_if_l} (b) applies.
\end{proof}

\begin{lemma}
\label{lem:first}
Let $\eta\in \Spec (Z_0^N(\g))$ and $g=\pi_{N}(\eta)\in U^-$. If $\Oc_{g}^{G_N}$ is Richardson, then $U_\eta^N(\g)$ has a small module. In particular this hold if $\g=\mathfrak{sl}_{n+1}$ and $\pi_{N}(\eta)\in U^-$. 
%Let $Q\subseteq N\subseteq\Lambda$, $\eta\in \Spec (Z_0^N(\g))$ and $g=\pi_{N}(\eta)\in B_N^-$. 
%Suppose that $L=C_{G_N}(g_s)^{\circ}$ is the standard Levi subgroup of a standard parabolic subgroup associated with $\Pi\subset \Delta$ and that 
%$\Oc_{g_u}^{L}$ is Richardson. Assume in addition that $(\ell,|N/Q_\Pi\oplus N_\perp^\Pi|)=1$.
%$(\ell, |\Lambda_\Pi/Q_\Pi|)=1$.
%Then $U_\eta^N(\g)$ has a small module. 
\end{lemma}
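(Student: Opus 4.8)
The plan is to use the isomorphism $U_\eta^N(\g)\simeq U_{\eta'}^N(\g)$ whenever $\pi_N(\eta')$ lies in $\calO_{\pi_N(\eta)}^{G_N}$ (recalled in \S\ref{sec:preliminaries}) to move to a convenient $\eta'$, and then invoke Lemma \ref{lem:g_if_l}(a). Since $g\in U^-$ is unipotent and $\calO_g^{G_N}$ is Richardson, after conjugating we may assume $\calO_g^{G_N}=\Ind_L^{G_N}(\{e\})$ with $L$ the standard Levi of the standard parabolic $P$ attached to some $\Pi\subseteq\Delta$ and $\{e\}\subseteq L$ the trivial class. By the description of induction recalled in \S\ref{par:levi_to_g}, $\calO_g^{G_N}$ meets $U_P^-$ in a dense subset, and $U_P^-\subseteq U^-$; so that intersection is nonempty, and I would fix $g''\in\calO_g^{G_N}\cap U_P^-$ together with $\eta'\in\pi_N^{-1}(g'')$ chosen with trivial torus coordinate, i.e. $\eta'(K_\mu^\ell)=1$ for all $\mu\in N$ — possible because $\pi_N$ is surjective and a preimage of an element of $U^-$ can be taken with trivial torus part. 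Then $U_{\eta'}^N(\g)\simeq U_\eta^N(\g)$, so it suffices to produce a small module for $U_{\eta'}^N(\g)$.

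Next I would read off, from the identifications in \S\ref{subsec:poisson} and the definition of $\pi_N$ in \S\ref{subsec:DCKPmap}, what $g''=\pi_N(\eta')\in U_P^-\subseteq U^-$ says about $\eta'$: one gets $\eta'(E_\alpha^\ell)=0$ for all $\alpha\in\Phi^+$ because $g''\in U^-$, and — since in our reduced decomposition the roots of $-\Phi_\Pi^+$ come last, so that $U^-=U_P^-\cdot(U^-\cap L)$ — also $\eta'(F_\alpha^\ell)=0$ for $\alpha\in\Phi_\Pi^+$ because $g''\in U_P^-$. Combined with $\eta'(K_\mu^\ell)=1$, this makes $\eta'_\ll$ (as built in \S\ref{par:levi_to_g}) satisfy $\pi_N(\eta'_\ll)=e$, so with $g':=\pi_N(\eta'_\ll)$ we have $\calO_{g'}^L=\{e\}$ and $\Ind_L^{G_N}(\calO_{g'}^L)=\Ind_L^{G_N}(\{e\})=\calO_g^{G_N}$; thus the hypotheses of Lemma \ref{lem:g_if_l}(a) hold for $\eta'$ and $\Pi$.

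It then remains to exhibit a $U_{\eta'}^N(\ll)$-module of dimension $\ell^{\frac{1}{2}\dim\calO_{g'}^L}=\ell^0=1$, and I would take the one-dimensional space on which every $E_\alpha$ and $F_\alpha$ ($\alpha\in\Pi$) acts by $0$ and every $K_\mu$ ($\mu\in N$) acts by $1$. Every defining relation of $U_\ve^N(\ll)$ is then clearly satisfied — those containing an $E$ or an $F$ because both sides act by $0$ (for $E_\alpha F_\beta-F_\beta E_\alpha=\delta_{\alpha\beta}(K_\alpha-K_{-\alpha})/(\ve_\alpha-\ve_\alpha^{-1})$ the right-hand side acts by $0$ since $K_\alpha$ and $K_{-\alpha}$ both act by $1$) — as are the relations $E_\alpha^\ell=F_\alpha^\ell=0$ ($\alpha\in\Phi_\Pi^+$) and $K_\mu^\ell=\eta'(K_\mu^\ell)=1$. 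Hence Lemma \ref{lem:g_if_l}(a) yields a small $U_{\eta'}^N(\g)$-module, and therefore a small $U_\eta^N(\g)$-module. For the last assertion I would invoke the classical fact that in type $A_n$ every unipotent conjugacy class is Richardson — the Richardson class of a parabolic with Levi of type $\prod_i\mathrm{GL}_{d_i}$ has Jordan type the transpose of the partition $(d_1,d_2,\dots)$, and every partition arises this way — so that $\pi_N(\eta)\in U^-$ already forces $\calO_{\pi_N(\eta)}^{G_N}$ to be Richardson and the first part applies.

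The argument is short, and the one delicate point is the coordinate bookkeeping in the second paragraph: one must check that choosing the representative $g''$ inside $U_P^-$, rather than merely inside $U^-$, kills precisely the coordinates $\eta'(F_\alpha^\ell)$ with $\alpha\in\Phi_\Pi^+$, so that $\pi_N(\eta'_\ll)$ is genuinely trivial and not just an element of $L$. This uses the compatibility among the chosen reduced decomposition of $w_0$ (with $\Phi_\Pi^+$ first), the factorisation $U^-=U_P^-\cdot(U^-\cap L)$, and the formula for $\pi_N$; the remaining ingredients — the reduction through Lemma \ref{lem:g_if_l} and the one-dimensional module over the quantized Levi — are formal.
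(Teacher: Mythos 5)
Your proposal is correct and takes essentially the same approach as the paper. The paper's proof is much terser: it simply asserts $U_{\eta_\ll}^N(\ll)=U_1^N(\ll)$ (implicitly invoking the conjugacy-class invariance $U_\eta^N(\g)\simeq U_\delta^N(\g)$ to normalise $\eta$ so that $\pi_N(\eta)\in U_P^-$ with trivial torus part) and then produces the one-dimensional module as the counit of the Hopf algebra $U_1^N(\ll)$; you have supplied exactly the bookkeeping that this compression elides.
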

%having Jordan decomposition such that $g_s\in T_N$ and $g_u\in U^-$.
\begin{proof}
%We have $\Oc_g^G={\rm Ind}_{L}^G(\Oc_{g}^L)$  \cite[Proposition 2.6]{gio-almeria}. By Lemma \ref{lem:g_if_l} it suffices to show that $U_{\eta}^{N}(\ll)$ has a module of dimension $\ell^{\frac{1}{2}\dim\calO_{g_u}^L}$. 
%By Lemma \ref{lem:tensor} and the assumption on $\ell$, this is equivalent to showing that $U_{\eta}^{N}(\Pi)$ has a module of the same dimension. Observe that $g_s\in Z(L)$, that 
%$g_u\in [L,L]=L_1\cdots L_r$ and that $g_u$ decomposes as a product $g_u=g_1\cdots g_r$ with 
%$d_u=\prod_{i=1}^r\ell^{\frac{1}{2}\dim\calO_{g_i}^{L_i}}$ and $\pi_{Q_{\Pi_i}}(\eta_i)=g_i$.
%It suffices to prove that each $U_{\ve_i}^{Q_{\Pi_i}}(\ll_i)/(\ker\eta_i)U_{\ve_i}^{Q_{\Pi_i}}(\ll_i)$ have a module of dimension $\ell^{\frac{1}{2}\dim\calO_{g_i}^{L_i}}$. 
%As in \S\ref{par:compatible} and \S\ref{par:levi_to_parabolic} we may assume that the PBW basis of $U_{\ve}(\g)$ is compatible with $\ll$ and that $\pi_{Q_{\Pi}}(\eta_{\ll})=g_u=\prod_{x\in\calC(\Pi)}\pi_{x}(\eta_{x})$ for some ordering of the components $\calC(\Pi)$. It suffices to treat the case where $[\ll,\ll]$ is simple. Then $U_{\eta}^{N}(\Pi)$ is the reduced enveloping algebra of $U_{\eta}^{Q_{\Pi}}([\ll,\ll])$ of $U_{\ve^{d(\Pi)}}^{Q_{\Pi}}([\ll,\ll])$. Hence, it suffices to treat the case where $g_s=1$ independent of the primitive root $\ve$ of one.
%So we assume that $[L,L]$ is simple.
By assumption we have $g\in\Ind_{L}^{G_N}(\Oc_1^{L})$, for some standard Levi subgroup $L$ of a parabolic subgroup of $G$, associated with $\Pi$. 
By Lemma \ref{lem:g_if_l} (a) it is enough to show that $U_{\eta_{\ll}}^N(\mathfrak{l})=U_{1}^N(\mathfrak{l})$ has a $1$-dimensional module. Being a Hopf algebra, the counit gives a small module. The last statement follows because all unipotent conjugacy classes in type $A_n$ are Richardson.
%Now, the small quantum group $U_{\boldsymbol{1}}^{Q_{\Pi'}}([\mathfrak{l}',\mathfrak{l}'])$ is a Hopf algebra, hence it  has a $1$-dimensional module, namely the counit. By Lemma \ref{lem:g_if_l} applied to $L'\subset L$ the statement follows.
%and \S\ref{par:tensor} we need to prove that $U_{\eta_L}^{Q_\Pi}([\mathfrak{l},\mathfrak{l}])$, with $\eta_L$ the restriction of $\eta$ to $Z_0^{Q_\Pi}([\mathfrak{l},\mathfrak{l}])$, has a small module. By assumption we have $\pi_{Q_{\Pi}}(\eta_L)=g_u\in\Ind_{L'}^L(\Oc_1^{L'})$, for some standard Levi subgroup $L'$ of a parabolic subgroup of $G_{Q_{\Pi}}$, associated with $\Pi'\subset\Pi$. Now, the small quantum group $U_{1}^{Q_{\Pi'}}([\mathfrak{l}',\mathfrak{l}'])$ is a Hopf algebra, hence it  has a $1$-dimensional module, namely the counit. By Lemma \ref{lem:g_if_l} applied to $L'\subset G_{Q_{\Pi}}$ we have the statement.
% As $g_s$ is central in $L$, we have $\pi_{Q_{\Pi}}(\eta_L)=g_u\in\Ind_{L'}^L(\Oc_1^{L'})$, for some standard Levi subgroup $L'$ of a parabolic subgroup of $[L,L]$, associated with $\Pi'\subset\Pi$. 
\end{proof}

We are ready to state our result on small modules for $\g=\mathfrak{sl}_{n+1}({\mathbb C})$. 

\begin{theorem}
\label{cor:sln}
If $(\ell, (n+1)!)=1$ then $U_\eta^M(\mathfrak{sl}_{n+1})$ has a small module for every $\eta\in\Spec(Z_0^M(\mathfrak{sl}_{n+1}))$ and every lattice $M$.
\end{theorem}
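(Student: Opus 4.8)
The plan is to reduce, via the two special-case lemmas already established, to the situation where $\pi_M(\eta)$ is a unipotent element of $U^-$, and then invoke the fact that all unipotent classes in type $A$ are Richardson. Concretely, I would first use the discussion in \S\ref{subsec:DCKPmap} together with the reduction observed after Conjecture \ref{conj:DCKP}: since $\g=\mathfrak{sl}_{n+1}$ is simple and the various constructions are compatible with conjugation, I may replace $\eta$ by any $\delta\in\pi_M^{-1}(\calO_h)$ where $h=\pi_M(\eta)$, because $U_\eta^M(\g)\simeq U_\delta^M(\g)$. Hence without loss of generality I may assume $h=\pi_M(\eta)\in B_M^-$, and in fact that the semisimple part $h_s$ lies in $T_M$ and $h_u$ lies in the unipotent radical $U^-$ of the opposite Borel, with $L:=C_{G_M}(h_s)^\circ$ a \emph{standard} Levi subgroup (a representative of the class can always be chosen so that its centralizer of the semisimple part is standard; for $GL$-type groups in $A_n$ this is elementary).

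The second step is to apply Lemma \ref{lem:g_if_l}(b): since $L=C_{G_M}(h_s)^\circ$ is the minimal Levi containing $C_{G_M}(h_s)^\circ$, the hypothesis $L=C_{G_M}(Z(C_{G}(h_s)^\circ)^\circ)$ holds and $\calO_h^{G_M}=\Ind_L^{G_M}(\calO_{h_u}^L)$. So it suffices to produce a $U_\eta^M(\ll)$-module of dimension $\ell^{\frac12\dim\calO_{h_u}^L}$. Now I bring in the coprimality hypothesis $(\ell,(n+1)!)=1$: the Levi $\ll$ in $\mathfrak{sl}_{n+1}$ has $[\ll,\ll]$ a product of factors of type $A_{m_i-1}$ with $\sum m_i\le n+1$, so $\det A_\Pi$ divides $(n+1)!$ and likewise all the relevant indices $|N/Q_\Pi\oplus N_\perp^\Pi|$ have order dividing a product of such determinants, hence are coprime to $\ell$. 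Therefore Lemma \ref{lem:tensor} (or the subsequent lemma for $N=\Lambda,Q$, but I want it for general $M$ so I use Lemma \ref{lem:tensor}) applies and gives $U_\eta^M(\ll)\simeq U_{\boldsymbol\eta}^{Q_\Pi}([\ll,\ll])\otimes K_M^\Pi$, which by \eqref{eq:decomposition} is a tensor product of reduced quantized enveloping algebras $U_{\ve_i}^{Q_{\Pi_i}}(\mathfrak{sl}_{m_i})/(\ker\eta_i)$ (tensored with a group algebra, which has $1$-dimensional modules). So by Lemma \ref{lem:second} it is enough to know the theorem for each simple factor $\mathfrak{sl}_{m_i}$ with $m_i\le n+1$, with the adjoint-type lattice $Q_{\Pi_i}$, and with $(\ell,m_i!)=1$.

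This sets up an induction on $n$. The base case $n=0$ (or $n=1$) is trivial. For the inductive step, the reduction above lands us either at a \emph{proper} Levi — in which case the factors $\mathfrak{sl}_{m_i}$ with $m_i<n+1$ are handled by the inductive hypothesis (applied with lattice $Q_{\Pi_i}$) — or at the case where $L=G_M$ itself, i.e. $h_s$ is central, so $h_u=\pi_M(\eta)$ (up to the central twist absorbed by the isomorphism) is unipotent and lies in $U^-$. That last case is exactly Lemma \ref{lem:first}: every unipotent class in type $A_n$ is Richardson, so $U_\eta^M(\g)$ has a small module. Assembling: Lemma \ref{lem:first} disposes of the unipotent case directly, and Lemma \ref{lem:second} together with Lemma \ref{lem:tensor} and the inductive hypothesis disposes of the non-central-semisimple-part case, completing the induction.

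The main obstacle I expect is \emph{bookkeeping the lattices correctly}: the inductive hypothesis must be available for the factors $\mathfrak{sl}_{m_i}$ with the specific lattice $Q_{\Pi_i}$ that arises from $[\ll,\ll]$, and one must check that the coprimality condition $(\ell,|N/Q_\Pi\oplus N_\perp^\Pi|)=1$ genuinely follows from $(\ell,(n+1)!)=1$ for an arbitrary intermediate lattice $M$ (not just $M=\Lambda$ or $M=Q$) — this is where the estimate that the relevant index divides a product of $\det A_{\Pi_i}$'s, each dividing $(n+1)!$, is needed, and it is the one genuinely nontrivial verification. A secondary point to be careful about is the passage from an arbitrary $\eta$ to one with $\pi_M(\eta)\in B_M^-$ and standard centralizer Levi, but this is handled by the conjugation-invariance established in \S\ref{subsec:DCKPmap} and the fact that in type $A$ every element is conjugate to one whose semisimple part centralizer is a standard Levi.
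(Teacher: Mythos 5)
Your proposal is correct and rests on the same ingredients as the paper's proof (Lemmas \ref{lem:g_if_l}, \ref{lem:tensor}, \ref{lem:second}, \ref{lem:first}, and the fact that all unipotent classes in type $A$ are Richardson), but it organizes the reduction differently. The paper's proof is shorter: it first invokes Theorem \ref{thm:iso} (equivalently Corollary \ref{cor:l_prime}) to replace an arbitrary lattice $M$ by $M=Q$ --- this is legitimate because $|\Lambda/Q|=n+1$ divides $(n+1)!$, so $(\ell,|\Lambda/Q|)=1$ --- and then applies Lemma \ref{lem:second} once and Lemma \ref{lem:first} to each simple factor $\mathfrak{sl}_{m_i}$ of the Levi, with no induction on $n$. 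You instead keep $M$ general, which means you must verify the coprimality hypothesis $(\ell,|M/(Q_\Pi\oplus M_\perp^\Pi)|)=1$ for arbitrary intermediate $M$; this does hold (the index divides $|M/Q|\cdot|Q/(Q_\Pi\oplus Q_\perp^\Pi)|$, and both factors have all prime divisors $\le n+1$), but your stated justification --- that the index ``divides a product of $\det A_{\Pi_i}$'s'' --- is not literally correct, since one must also absorb the factor $|M/Q|$. The early reduction to $M=Q$ dodges this entirely. Your induction on $n$ is also unnecessary: once Lemma \ref{lem:second} lands you at unipotent elements of the simple factors $\mathfrak{sl}_{m_i}$ with lattice $Q_{\Pi_i}$, Lemma \ref{lem:first} applies directly (all unipotent classes in type $A$ are Richardson), and you are done --- there is no need for a recursive call; your case distinction between $L$ proper and $L=G_M$ is precisely the generality that Lemma \ref{lem:first} already covers. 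Finally, a small citation slip: when you say ``it suffices to produce a $U_\eta^M(\ll)$-module'' you are using Lemma \ref{lem:g_if_l}(a) (which constructs a $U_\eta^N(\g)$-module from a Levi module), not part (b), which goes the other way.
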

\begin{proof}
By Lemma \ref{thm:iso}, it suffices to consider the lattice $M=Q$.  The connected centralizer $L$ of any semisimple element in $G={\rm PSL}_{n+1}({\mathbb C})$
is a Levi subgroup of a parabolic subgroup of $G_N$ corresponding to some $\Pi\subseteq\Delta$ and every unipotent class in $[L,L]$ is Richardson. We apply Lemmas \ref{lem:second} and \ref{lem:first}.
\end{proof}

The condition on $\ell$ in the theorem  is in accordance to the one given by Friedlander and Parshall in their proof of 
the existence of small modules for $\mathfrak{sl}_{n+1}$ in the modular case \cite[Theorem 5.1]{fri-par}.

\begin{remark}\label{rem:rigid}Lemma \ref{lem:first} and Theorem \ref{cor:sln} are special cases of the following pattern, which is similar to \cite[Theorem 5.1]{fri-par}.
By transitivity of induction, combining Lemma \ref{lem:tensor} with Lemma \ref{lem:g_if_l} (a) and the product decomposition in \eqref{eq:decomposition}, we see that if there exists a small module for 
all rigid conjugacy classes in all simple factors of Levi subgroups of $G$, then there is a small module for all conjugacy classes in $G$.  
\end{remark}

\section{The case $(\ell, |N/M|)\neq1$}

\label{sec:l_prime}

In this section we deal with Questions \eqref{Q1}, \eqref{Q2} and \eqref{Q3} when $\g$ is simple of type $A_n$ or $E_6$,  $M\subsetneq N$ and $d:=(\ell, |N/M|)\neq1$. 
With respect to the dimension of $\g$, the smallest case to consider is type $A_2$ when $3\mid \ell$. From a different perspective, with respect 
to $\dim \calO_{\pi_{N}(\eta)}$ the smallest cases are the central $\ell$-characters. Type $A_2$ is considered in \S \ref{par:A2} 
and we describe precisely when there exists a small module for a central $\ell$-character and all $\g$ in \S\ref{par:irred_central}.
%The latter is also proved in a more general setup in \S\ref{sec:cyclic}.
We observe that if $\pi_N(\eta)=1$ then $U_1^N(\g)$ is a Hopf algebra, so it  always has a small module, namely the one given by the counit. For this reason we will deal only with $\eta$ such that $\pi_N(\eta)\neq1$.

\subsection{}
\label{par:central}
An $\ell$-character $\eta\in Z_0^{N}(\g)$ is called \emph{central} if $\pi_N(\eta)\in Z(G_{N})$. 
For such characters we have $\eta(E_{\alpha}^{\ell})=\eta(F_{\alpha}^{\ell})=0$ for all $\alpha\in\Phi^{+}$. 
Moreover, $\pi_N(\eta)\in Z(G_N)$ if and only $\alpha(\pi_N(\eta))=1$ for every $\alpha\in \Delta$, i.e., if and only if $\eta(K_{\alpha}^{2\ell})=1$ for all $\alpha\in Q$.
%$\pi_{N}(\eta)$ is in the kernel of the isogeny $\phi_{NQ}$ restricted to $T_{N}$ so $\phi_{NQ}(\pi_{N}(\eta))=1$ if and only if $\alpha(\phi_{NQ}(\pi_{N}(\eta)))=1$ for all $\alpha\in\Delta$. Considering the commutative diagram in \S\ref{subsec:DCKPmap}, this is equivalent to $\alpha(\iota^{\ast}(\pi_{Q}(\eta)))=1$ for all $\alpha$ $\Leftrightarrow$ $\eta(K_{\iota(\alpha)}^{2\ell})=1$ for all $\alpha$.
%As in \S\ref{subsec:DCKPmap} this isogeny is the comorphism $\iota^{\ast}$ of the inclusion map $\CC[T_Q]=\CC[Q]\subseteq\CC[N]=\CC[T_N]$ so $\phi_{NQ}(\pi_{N}(\eta))=1$ if and only if $\alpha(\phi_{NQ}(\pi_{N}(\eta)))=1$ for all $\alpha\in\Delta$. Since the diagram in \S\ref{subsec:DCKPmap} commutes this is equivalent to $\alpha(\iota^{\ast}(\pi_{Q}(\eta)))=1$ for all $\alpha$ $\Leftrightarrow$ $\eta(K_{\iota(\alpha)}^2)=1$ for all $\alpha$. %which in turn is equivalent to $\iota(\alpha)(\pi_Q(\eta))=1\Leftrightarrow $ for all $\nu\in N$. 
In our setting, by Lemma \ref{lem:AnE6}, a central $\ell$-character
$\eta$ is uniquely determined by $\eta(K_{\lambda_N}^{2\ell})=\omega$ a $|N/Q|$-th root of one. By \S\ref{subsec:automorphisms} it suffices to treat the case $\eta(K_{\lambda_N}^\ell)=\omega$.

\subsection{}
\label{par:trick}
If it exists, a small module $V$ for a central $\ell$-character $\eta$ has dimension $1$. We recall from \cite[\S 9.1]{DCP} that in this case $E_{\alpha}V=F_{\alpha}V=0$ for any $\alpha$ and
\begin{equation}
E_\alpha F_\alpha-F_\alpha E_\alpha=\frac{K_{\alpha}-K_{-\alpha}}{\ve_{\alpha}-\ve_{\alpha}^{-1}}
%\Rightarrow
%(K_{\alpha}-K_{\alpha}^{-1})V=0
\Rightarrow
K_{\alpha}^{2}=\id_V.
\end{equation}
%\begin{remark}\label{rem:trick}
In general, consider a finite-dimensional $U_{\ve}^M(\g)$-module $W$. 
%\begin{enumerate}
%\item
If $E_\alpha W=0$ or $F_\alpha W=0$ for some $\alpha\in \Phi$, then $K_\alpha^2=\id_{W}$. %This follows from the relation $E_\alpha F_\alpha-F_\alpha E_\alpha=\frac{K_\alpha-K_\alpha^{-1}}{\ve-\ve^{-1}}$. 
%\item
Conversely, if $K_\alpha^2=\id_{W}$, then for any $\beta\in \Phi$ such $(\alpha|\beta)\not\equiv 0\mod \ell$ we have $F_\beta W=E_\beta W=0$.
Indeed, as $K_\alpha$ is diagonalizable, it is enough to show that  $E_\beta$ and $F_\beta$ act trivially on each of its eigenspaces.
However, these operators map any $\pm 1$-eigenvector to the $\pm\ve^{\pm(\alpha,\beta)}$-eigenspace. The latter is trivial because $\pm\ve^{\pm(\alpha,\beta)} \neq1$.
%\end{enumerate} 

\subsection{}
\label{par:A2}
%Question \eqref{Q2} has a negative answer, when $d\neq 1$.
We explore here the case where $\g=\mathfrak{sl}_3(\CC)$ and $3\mid \ell$. The example below shows that Question \eqref{Q2} from \S \ref{subsec:Q} has a negative answer in general. 

%\begin{example}
%\label{rem:counter}
%Let $3|\ell$, $\g=\mathfrak{sl}_3({\mathbb C})$, $U_\ve^\Lambda(\g)$. In this case $G_N=SL_3({\mathbb C})$ and we look for small modules for  $z\in Z(SL_3({\mathbb C}))$, that is, $1$-dimensional modules lying over $z$.
%As $K_\lambda E_i K_{-\lambda}=\ve^{\lambda(\alpha_i)}E_i$, and similarly for $F_i$, and since $K_\lambda$ is invertible, then any $1$-dimensional representation $V={\mathbb C}$ of $U_\ve^\Lambda(\g)$satisfies $E_i.1=F_i.1=0$. Since $E_iF_i-F_iE_i=\frac{K_{\alpha_i}-K_{-\alpha_i}}{(\ve-\ve^{-1})}$, then $K_\lambda^2.1=1$ for every $\lambda\in Q$. 

%As $K_\lambda E_i K_{-\lambda}=\ve^{\lambda(\alpha_i)}E_i$, and similarly for $F_i$, and since $K_\lambda$ is invertible, then any $1$-dimensional representation $V={\mathbb C}$ of $U_\ve^\Lambda(\g)$ satisfies $E_i.1=F_i.1=0$.

By \S\ref{par:trick} the possible $1$-dimensional representations of $U_\ve^\Lambda(\mathfrak{sl}_3)$ are in bijection with $\Hom(\Lambda/2Q,{\mathbb C})$. Let $V=\CC$ be a $1$-dimensional module affording the central $\ell$-character $\eta$.
As $3\lambda_i\in Q$, we have $K_{\lambda_i}^6.1=1$ on $V$ and since $3\mid\ell$, necessarily $K_{\lambda_i}^{2\ell}.1=1$ on $V$. The image of $V$ through the map defined in \cite[\S4.3]{DCKP} is the element $t=\pi_{\Lambda}(\eta)$ in 
$T_{\Lambda}$ such that $\lambda_i(t)=\eta(K_{\lambda_i}^{2\ell})$ for every $i$. But $\eta(K_{\lambda_i}^{2\ell})=1$ on $V$ hence $t=\pi_\Lambda(\eta)=1$. Therefore, if $\pi_{\Lambda}(\eta)\in Z(SL_3({\mathbb C}))$ is 
of order $3$, there does not exist a small module for $U_{\eta}^{\Lambda}(\mathfrak{sl}_3)$. 
%\end{example}

We show now that the minimal dimension  of an irreducible $U_{\ve}^\Lambda(\mathfrak{sl}_3)$-module with central $\ell$-character $\eta$ with $\pi_\Lambda(\eta)\neq1$ is at least $3$.  
%We have $\Lambda=\la\lambda,\alpha\ra$ where $\lambda=\lambda_{\alpha}$, $(\lambda_\alpha,\alpha)=1$. %so $\alpha=2\lambda_\alpha-\lambda_{\beta}$. 
%Also $[x]=\frac{\ve^{x}-\ve^{-x}}{\ve-\ve^{-1}}$.
%$U_{\ve}^{\Lambda}(\mathfrak{sl}_3)$ is generated by $E_{\alpha}$, $E_{\beta}$, $F_{\alpha}$, $F_{\beta}$, $K_{\lambda}$ and $K_{\alpha}$ subject to relations as in \S\ref{sec:notation}. 
Let $w_0$ have reduced expression $s_{\beta}s_{\alpha}s_{\beta}$. 
%Then 
%$E_2:=E_{\beta}$, $E_{12}:=T_{\beta}(E_{\alpha})$, $E_{1}:=T_{\beta}T_{\alpha}(E_{\beta})=E_{\alpha}$ are the root vectors. 
From \cite[\S5]{lusztig-rootof1},  $E_{\beta+\alpha}=-E_{\beta}E_{\alpha}+\ve^{-1}E_{\alpha}E_{\beta}$.

%$$
%E_{12}=E_{\beta+\alpha}=T_{\beta}(E_{\alpha})=
%-E_{\beta}E_{\alpha}
%+\ve^{-1}E_{\alpha}E_{\beta}.
%$$
%$$
%\ve\lambda E_{12}v
%=K_{\alpha}E_{12}v
%=K_{\alpha}(-E_{2}w+\ve^{-1}E_{1}E_{2}v)
%=-\ve^{-1}(\ve^2\lambda) E_{2}w+\lambda E_{1}E_{2}v
%$$
By the discussion in \S\ref{par:trick} it suffices to show that if $W$ is a $U_{\eta}^\Lambda(\mathfrak{sl}_3)$-module of dimension $2$ then $E_\gamma. W=0$ for some $\gamma\in\Phi^{+}$ because then $\eta(K_\alpha^{2\ell})=1$ 
for every $\alpha\in Q$ and the previous argument applies.
% as follows. Assume there is an irreducible $U_{\eta}^\Lambda(\g)$-module $W$ of dimension $2$. There are two possibilities: 
%\begin{enumerate}[1.)]
%\item
%1.)
%$E_\alpha. W=0$ for every $\alpha\in Q$. Hence, $K_\alpha^2=\id_W$ for every $\alpha$, so as in \S\ref{par:trick} we have $K_{\lambda_i}^6.1=1$ for all $\lambda_i$.
%\item
There exists a basis $\{v,w\}$ of $W$ consisting of weight vectors for $U^\Lambda_\eta(\h)$. If $E_\alpha.W\neq 0$ then we may assume that $w= E_\alpha v\neq 0$.  Moreover, since $E_\alpha$ is nilpotent, 
$E_\alpha^2.W=0$. If $K_{\alpha}v=\lambda v$,
then $K_{\alpha}w=\ve^2\lambda w$. As $K_{\alpha}E_{\alpha+\beta}v=\ve\lambda E_{\alpha+\beta}v$, 
since $\ve\lambda\not\in\{\lambda,\ve^2\lambda\}$ we have $0=E_{\alpha+\beta}v=(-E_{\beta}E_{\alpha}+\ve^{-1}E_{\alpha}E_{\beta})v$. Hence $E_{\beta}w=\ve^{-1}E_{\alpha}E_{\beta}v$. Now $K_{\alpha}E_{\beta}w=\ve\lambda 
E_{\beta}w$ so as before $E_{\beta}w=0$
and $E_{\alpha+\beta}w=(-E_{\beta}E_{\alpha}+\ve^{-1}E_{\alpha}E_{\beta})w=-E_{\beta}E_{\alpha}w=0$ where the last equality follows from $E_\alpha^{2}v=E_\alpha w=0$. Hence $E_\gamma.W=0$ for $\gamma=\alpha+\beta$.
%By the discussion in \S\ref{par:trick} it follows that $K_{\alpha+\beta}=\id_{W}$ a contradiction.
%\end{enumerate}

\subsubsection{}
We look at the case $\ell=3$ and we show that there is an irreducible representation of $U^\Lambda_\varepsilon(\mathfrak{sl}_3)$ with central $\ell$-character $\pi_\Lambda(\eta)\neq1$ and dimension $3$.
By the above discussion, the minimal dimension of an irreducible $U^\Lambda_\eta(\mathfrak{sl}_3)$-module  is therefore exactly $3$.

Let $z\in{\mathbb C}$ be such that $z^3=\varepsilon^2$. The map $\rho\colon U^\Lambda_\eta(\mathfrak{sl}_3)\to  {\rm Mat}_3({\mathbb C})$ given on generators by
\begin{equation*}
E_{\alpha}\mapsto
\begin{bmatrix}
0 & 0 & 0
\\
0 & 0 & 1
\\
0 & 0 & 0
\end{bmatrix}
,\quad
E_{\beta}\mapsto
\begin{bmatrix}
0 & 0 & 0
\\
0 & 0 & 0
\\
1 & 0 & 0
\end{bmatrix}
,\quad
F_{\alpha}\mapsto
\begin{bmatrix}
0 & 0 & 0
\\
0 & 0 & 0
\\
0 & 1 & 0
\end{bmatrix}
,\quad
F_{\beta}\mapsto
\begin{bmatrix}
0 & 0 & 1
\\
0 & 0 & 0
\\
0 & 0 & 0
\end{bmatrix}
\end{equation*}
and
$K_{\lambda_1}\mapsto \operatorname{diag}(z, z^{-2},z)$, $K_{\alpha}\mapsto \operatorname{diag}(1,\varepsilon,\varepsilon^2)$ is a representation.
It is irreducible because $\rho$ is surjective. Its $\ell$-character $\eta$ is central, with $\eta(K_{\lambda_1}^6)=\varepsilon$. Since $U^\Lambda_\eta(\mathfrak{sl}_3)$ is a Hopf algebra we can construct
the dual representation. It is again irreducible and has central $\ell$-character $\eta^*$ with $\eta^*(K_{\lambda_1}^6)=\varepsilon^2$.

\subsection{}

The observations on central $\ell$-characters in the case of $\mathfrak{sl}_3$ can be generalized to arbitrary simple $\g$.  
\label{par:irred_central}

%For $\eta\in \Spec(Z_0^M(\g))$ such that $\pi_M(\eta)=z\in Z(G_M)$, the DCKP-conjecture is trivially true since $\ell^{\frac{1}{2}\dim\calO_{\eta}}=1$. We want to know when this bound is attained by some module.
% 
\begin{proposition}
\label{prop:central}
Let $\g$ be a simple Lie algebra, let $M$ be a lattice with $m:=|M/Q|$ and $d:=(\ell,|M/Q|)$. Let $\eta\in \Spec(Z_0^M(\g))$ be 
such that $\pi_M(\eta)=z\in Z(G_M)$. Then, there exists a $1$-dimensional representation of $U_\eta^M(\g)$ if and only if the order of $z$ divides $\frac{m}{d}$. 
\end{proposition}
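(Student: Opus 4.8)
The plan is to reduce everything to a statement about the finite abelian group $M/\ell M$ and the action of the $K_\mu$'s, using the description of one-dimensional modules given in \S\ref{par:trick}. First I would observe that a one-dimensional $U_\eta^M(\g)$-module $V$ forces $E_\alpha V = F_\alpha V = 0$ for all $\alpha$ (since these operators are nilpotent and act by scalars), so the commutation relation gives $K_\alpha^2 = \id_V$ for every $\alpha \in \Delta$, i.e.\ $K_\alpha$ acts by $\pm 1$. Since $\ell$ is odd and $K_\alpha^{2\ell} = \eta(K_\alpha^{2\ell}) = 1$ automatically (centrality of $\eta$), the constraint $K_\alpha^2 = \id$ is the genuine one, and it factors through $K_\alpha \in (Q/2Q)$... more precisely, a one-dimensional module exists iff there is a character $\psi \colon M/\ell M \to \CC^\times$ with $\psi(\alpha)^2 = 1$ for all $\alpha\in\Delta$ and $\psi(\mu)^\ell = \eta(K_\mu^\ell)$ for a basis of $M$. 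So the existence question becomes: does the system $\psi|_Q$ has values in $\{\pm 1\}$, together with the prescribed $\ell$-th powers coming from $\eta$, admit a solution?

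Next I would make this concrete using Lemma \ref{lem:AnE6}: by \S\ref{par:central} the central character $\eta$ is determined by $\omega := \eta(K_{\lambda_M}^\ell)$, which is an $m$-th root of unity (here $\lambda_M = \lambda_\Lambda$ scaled so that $M = \langle \lambda_M, \alpha_1,\dots,\alpha_{n-1}\rangle$ with $|\Lambda/M|\lambda_\Lambda = \lambda_M$). One constructs a candidate character $\psi$ by setting $\psi(\alpha_i) = 1$ for all simple roots (forced up to sign, and signs can be absorbed by the automorphism of \S\ref{subsec:automorphisms}) and $\psi(\lambda_M)$ equal to a chosen $\ell$-th root of $\omega$. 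The compatibility condition is then exactly that $\psi(\lambda_M)^2$, once expressed via the relation $m\lambda_M \in Q$ (so $\psi(m\lambda_M) = \psi$ of an integer combination of simple roots $= 1$), is consistent: $\psi(\lambda_M)^m = 1$ must hold, and $\psi(\lambda_M)$ must be an $\ell$-th root of $\omega$. Writing $z = \pi_M(\eta)$, the order of $z$ in $Z(G_M) \cong M/Q$ (of order $m$) is computed from $\omega$: since $\lambda_i(z) = \eta(K_{\lambda_i}^{2\ell})$ and $z$ has order $\mathrm{ord}(\omega)$-related quantity, one gets $\mathrm{ord}(z) = m/\gcd(m, \text{something})$; the key arithmetic identity to pin down is that $\mathrm{ord}(z) \mid m/d$ iff $\omega$ is a $d$-th power of an $m$-th root of unity iff $\omega$ admits an $\ell$-th root that is also an $m$-th root of unity.

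Concretely, the heart is the following elementary lemma: an $m$-th root of unity $\omega$ has an $\ell$-th root which is again an $m$-th root of unity if and only if $\mathrm{ord}(\omega) \mid m/d$ where $d = \gcd(\ell,m)$. I would prove this by writing $\omega = \zeta^k$ for a primitive $m$-th root $\zeta$; an $\ell$-th root that is an $m$-th root means solving $\ell x \equiv k \pmod m$, solvable iff $d \mid k$, i.e.\ iff $\mathrm{ord}(\omega) = m/\gcd(m,k)$ divides $m/d$. Then I would translate: "$U_\eta^M(\g)$ has a one-dimensional module" $\iff$ "$\psi(\lambda_M)$ can be chosen as an $\ell$-th root of $\omega$ lying in $\mu_m$" (because $\psi(\lambda_M)^m = \psi(m\lambda_M) = \psi(\text{combination of }\alpha_i) = 1$ is required, and conversely once $\psi(\lambda_M) \in \mu_m$ with $\psi(\lambda_M)^\ell = \omega$ we can define $\psi$ on all of $M/\ell M$ consistently, necessarily with $\psi(\alpha_i)^2 = 1$ since $\psi(\alpha_i) \in \mu_m$ and... actually $\psi(\alpha_i)$ need to be $\pm1$: here I must be careful, since $\alpha_i \in Q$ and $\psi$ on $Q$ is determined by its value on $\lambda_M$ via $Q \ni \alpha_i$, but $Q$ might not contain $\lambda_M$; the cleanest route is to first fix $\psi \equiv 1$ on all $\alpha_i$, check this is consistent with $\psi(\lambda_M)^m = 1$, which holds iff $m\lambda_M \in Q$ maps to $1$, automatic).

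The main obstacle I anticipate is getting the dictionary between $\omega = \eta(K_{\lambda_M}^\ell)$ and $\mathrm{ord}(z)$ exactly right, including the factor-of-$2$ bookkeeping ($K^{2\ell}$ versus $K^\ell$, resolved by \S\ref{subsec:automorphisms} since $\ell$ odd) and the precise normalization of $\lambda_M$ relative to the basis in Lemma \ref{lem:AnE6}; once that is nailed down, both implications follow: if $\mathrm{ord}(z) \mid m/d$ the arithmetic lemma produces a valid $\psi$ hence a one-dimensional module, and conversely a one-dimensional module yields such a $\psi$, forcing $\psi(\lambda_M) \in \mu_m$ to be an $\ell$-th root of $\omega$, hence $\mathrm{ord}(z) \mid m/d$. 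I would also remark that the "only if" direction generalizes the $\mathfrak{sl}_3$ computation in \S\ref{par:A2}, where $m = d = 3$ gave $m/d = 1$ and thus forced $z = 1$.
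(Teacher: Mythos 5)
Your approach is essentially the paper's: reduce via \S\ref{par:trick} to the constraints $K_{\alpha}^2=\id_V$, use the basis $\lambda_M,\alpha_1,\dots,\alpha_{n-1}$ from Lemma \ref{lem:AnE6}, and decide existence by an arithmetic condition on roots of unity. The paper makes that arithmetic explicit with a Bezout identity $d=a\ell+bm$, constructing the scalar $\xi=\psi(\lambda_M)$ by prescribing $\xi^{2d}=\lambda_M(z)^a$ and then verifying $\xi^{2m}=1$ and $\xi^{2\ell}=\lambda_M(z)$; your ``elementary lemma'' about $\ell$-th roots of $m$-th roots of unity is the same computation packaged differently (solvability of $\ell x\equiv k\pmod m$ iff $d\mid k$ is Bezout), so I would not call it a genuinely different route.

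Two gaps in the plan should be closed. First, Lemma \ref{lem:AnE6} is stated only for types $A_n$ and $E_6$, so you must first dispose of the case $d=1$ (which, since $\ell$ is odd, covers every other simple type, where $|\Lambda/Q|$ is a $2$-power or trivial); the paper does this at the start by Theorem \ref{thm:iso}, which gives $U_\eta^M(\g)\simeq U_1^Q(\g)$ when $d=1$. Second, the bookkeeping you flag is genuine and your intermediate phrasing is slightly off: fixing $\psi(\alpha_i)=1$ for $i<n$ does not leave $\psi(\alpha_n)$ free, since $\alpha_n\equiv\pm m\lambda_M$ modulo $\langle\alpha_1,\dots,\alpha_{n-1}\rangle$ (unimodularity of the change of basis for $Q$ forces the coefficient of $m\lambda_M$ in $\alpha_n$ to be $\pm1$). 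Thus the real constraint coming from $K_{\alpha_n}^2=\id$ is $\psi(\lambda_M)^{2m}=1$, not $\psi(\lambda_M)^m=1$, and the compatibility with $\eta$ is $\psi(\lambda_M)^{2\ell}=\lambda_M(z)$. Setting $\zeta:=\psi(\lambda_M)^2$ reduces the existence question exactly to your lemma ($\zeta^m=1$, $\zeta^\ell=\lambda_M(z)$), and after that the argument goes through.
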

\begin{proof}If $d=1$, Theorem \ref{thm:iso} yields $U_\eta^M(\g)\simeq U_1^Q(\g)$
%which is a Hopf algebra, so the counit will do. 
so we assume $d\neq 1$. Here $\g$ is either of type $E_6$ or $A_n$. We set $\ell':=\frac{\ell}{d}$ and $m':=\frac{m}{d}$.
From Lemma \ref{lem:AnE6}, $U_\ve^M(\h)$ has a ${\mathbb Z}$-basis given by $\lambda_M$,  $\alpha_1,\,\ldots,\,\alpha_{n-1}$, where the order of $\lambda_M Q$ in $M/Q$ is exactly $m$, and 
$m\lambda_M$,  $\alpha_1,\,\ldots,\,\alpha_{n-1}$ is a ${\mathbb Z}$-basis for $Q$. 
%Every $1$-dimensional representation $V={\mathbb C}v$ of $U_\ve^M(\g)$ satisfies $E_\alpha.v=F_\alpha.v=0$ for every $\alpha\in\Phi^+$ and $K_{\alpha_j}^2.v=v$ for every $j$.  Thus, 
If $V$ is a $1$-dimensional $U_\eta^M(\g)$-module, by \S \ref{par:trick}
we have $K_{\alpha_j}^{2}=\id_V$ for every $j$ and  $\lambda_M(z)=\eta(K_{\lambda_M}^{2\ell})$ so 
$\lambda_M(z^{m'})=\eta(K_{m\lambda_M}^{2\ell'})=1$, whence $z^{m'}=1$. Conversely, if $z^{m'}=1$, we define the following action of $U_\ve^M(\h)$ on $V={\mathbb C}v$: we set
$K_{\alpha_j}.v=v$  for every $j\neq n$ and for $a,b\in{\mathbb Z}$  and $\xi\in{\mathbb C}$ satisfying $d=a\ell+bm$ and $\xi^{2d}=\lambda_M(z)^a$, we set $K_{\lambda_M}.v=\xi v$.  
Then, $K_{m\lambda_M}^{2}.v=\xi^{2dm'}.v=\lambda_M(z^{m'})^{2a}.v=v$, so  $K^2_{\alpha_n}.v=v$. Thus, setting $E_\alpha.v=F_\alpha.v=0$  for every $\alpha\in\Phi^+$ gives a well-defined representation of $U_\ve^M(\g)$. As 
$E_\alpha^\ell=F_\alpha^\ell=0$ for every $\alpha$, we have $\chi_M(V)\in \Spec({\mathbb C}[K_{\mu}^{\pm\ell},\mu\in M])$. Moreover, $K_{\alpha_j}^{2\ell}=1$ for every $j$, so $\tau_M\chi_M(V)\in Z(G_M)$.
Finally, $K_{\lambda_M}^{2\ell}=\xi^{2\ell}=\xi^{2d\ell'}=\lambda_M(z)^{a\ell'}=\lambda_M(z)^{1-bm'}=\lambda_M(z)$. Hence, $V$ is a $1$-dimensional representation of $U_\eta^M(\g)$.
\end{proof}

Motivated  by the positive results from Section \ref{sec:positive} and the above discussion, we formulate a quantum analogue of Humphreys conjecture.

\begin{conjecture}\label{conj:small}
Let $\g$ be a Lie algebra with root system $\Phi$, let $M$ be a lattice satisfying $Q\subset M\subset \Lambda$, and let $\ell$ be such that $(\ell, b(\g)!)=1$. 
Then, for every  $\eta\in \spec(Z_0^M(\g))$  with  $\pi_M(\eta)\in \calO$ there exists an irreducible $U_{\eta}^{M}(\g)$-module $V$ such that $\dim V=\ell^{\frac{1}{2}\dim\calO}$.
\end{conjecture}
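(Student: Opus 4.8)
The plan is to combine the isogeny comparison of Section~\ref{sec:reduced} with the parabolic‑induction machinery of Section~\ref{sec:positive}, so as to reduce Conjecture~\ref{conj:small} to the construction of small modules attached to \emph{rigid} conjugacy classes in simple groups; this last construction is what I expect to be the real difficulty.

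First I would note that the hypothesis $(\ell,b(\g)!)=1$ already forces $(\ell,|\Lambda/Q|)=1$: among the simple types the only ones for which $|\Lambda/Q|$ is not automatically coprime to an odd $\ell$ are $A_n$ and $E_6$, and there $|\Lambda/Q|$ equals $n+1$, resp.\ $3$, each dividing $b(\g)!$. Hence Corollary~\ref{cor:l_prime}(b) lets us replace $M$ by any lattice between $Q$ and $\Lambda$; I take $M=Q$, so $G:=G_Q$ is adjoint. Using the automorphisms of \S\ref{subsec:automorphisms} and the fact that $U_\eta^Q(\g)\simeq U_\delta^Q(\g)$ whenever $\pi_Q(\eta)$ and $\pi_Q(\delta)$ are $G$‑conjugate, I may then assume $g:=\pi_Q(\eta)\in B_Q^-$ with $g_s\in T_Q$ and with $L:=C_G\bigl(Z(C_G(g_s)^\circ)^\circ\bigr)$ a \emph{standard} Levi subgroup, corresponding to some $\Pi\subseteq\Delta$; recall that $L$ is the minimal Levi containing $C_G(g_s)^\circ$, that $g_u\in[L,L]$, and that $\calO_g^G=\Ind_L^G(\calO_{g_sg_u}^L)$.

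By Lemma~\ref{lem:g_if_l} it then suffices to produce a $U_\eta^Q(\ll)$‑module of dimension $\ell^{\frac12\dim\calO_{g_sg_u}^L}$, where $\ll=\Lie(L)$: part~(a) reduces the existence of a small $U_\eta^Q(\g)$‑module to this, and part~(b), valid because $L$ is the minimal Levi containing $C_G(g_s)^\circ$ so that the main theorem of \cite[\S8]{DCK} applies, controls irreducibility. Since $(\ell,b(\g)!)=1$ we have $(\ell,\det A_\Pi)=1$ for the Cartan matrix $A_\Pi$ of $[\ll,\ll]$ — the components of $\Pi$ contributing a non‑unit determinant are either of type $A_{m-1}$ with $m\le b(\g)$ or involve only bad primes $\le b(\g)$ — so Lemma~\ref{lem:tensor} (in the $N=Q$ form) together with \eqref{eq:decomposition} gives
$$
\{\dim V~|~V\in\Spec(U_\eta^Q(\ll))\}=\Bigl\{\textstyle\prod_i\dim V_i~\Bigm|~V_i\in\Spec\bigl(U_{\ve_i}^{Q_{\Pi_i}}(\ll_i)/(\ker\eta_i)U_{\ve_i}^{Q_{\Pi_i}}(\ll_i)\bigr)\Bigr\},
$$
the central factor $K_Q^\Pi$ being a commutative group algebra contributing only $1$‑dimensional modules. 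Because $g_s$ is central in $L$, hence absorbed into this torus direction, and $\dim\calO_{g_sg_u}^L=\sum_i\dim\calO_{g_{u,i}}^{L_i}$, the problem becomes: for each simple factor $\ll_i$ of $[\ll,\ll]$ construct a module of dimension $\ell^{\frac12\dim\calO_{g_{u,i}}^{L_i}}$ for the reduced quantum algebra of $\ll_i$ at the $\ell$‑character $\eta_i$ with $\pi(\eta_i)=g_{u,i}$, a \emph{unipotent} element. Applying Lusztig--Spaltenstein induction once more, with the counit giving the module over the trivial (Richardson) step exactly as in the proof of Lemma~\ref{lem:first}, and iterating via transitivity of induction as recorded in Remark~\ref{rem:rigid}, we are reduced to the case in which $\calO_{g_{u,i}}^{L_i}$ is a rigid unipotent class — together with the parallel case of a rigid class whose semisimple part is isolated, i.e.\ one coming from a $g_s$ whose connected centraliser lies in no proper Levi.

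The main obstacle is precisely this final reduced case: at present there is no uniform way to attach to a rigid conjugacy class $\calO$ in a simple group a module of dimension $\ell^{\frac12\dim\calO}$, and this is the exact analogue of the difficulty already present for Humphreys' conjecture in the modular setting, where rigid — in particular minimal — nilpotent orbits are the stumbling block. In type $A_n$ there are no nontrivial rigid unipotent classes, which is why Theorem~\ref{cor:sln} succeeds; for the minimal unipotent class in the other types one might hope to write down an explicit matrix realisation generalising the $\mathfrak{sl}_3$ computation of \S\ref{par:A2}, while the general rigid case seems to require genuinely new input — a quantum analogue of the Springer‑fibre/localisation geometry used in the modular case, or a combination of the lower bounds of \cite{sev1,sev2} with an independent construction meeting them. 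I would therefore expect a full proof of Conjecture~\ref{conj:small} to rest on such a case‑by‑case or geometric construction for rigid classes, the rest being the formal reductions sketched above.
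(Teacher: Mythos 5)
Conjecture~\ref{conj:small} is precisely that --- a \emph{conjecture} --- and the paper does not claim a proof; there is therefore no ``paper's own proof'' to compare against. What the paper does offer is the evidence cited in the paragraph following the conjecture: Lemmas~\ref{lem:second} and~\ref{lem:first}, Theorem~\ref{cor:sln} for $\mathfrak{sl}_{n+1}$, and the reduction to rigid conjugacy classes recorded in Remark~\ref{rem:rigid}. Your proposal is not a proof either, and you say so; rather, you reconstruct (correctly, and with a bit more detail than the paper spells out) exactly the reduction chain the authors had in mind. In particular you correctly observe that $(\ell,b(\g)!)=1$ forces $(\ell,|\Lambda/Q|)=1$, so Corollary~\ref{cor:l_prime} reduces to a single isogeny type; that $(\ell,\det A_\Pi)=1$ for every $\Pi\subseteq\Delta$, so the passage from $U_\eta^Q(\ll)$ to $U_{\boldsymbol\eta}^{Q_\Pi}([\ll,\ll])\otimes K_Q^\Pi$ is available; and that transitivity of induction plus the counit pushes the problem down to rigid classes in the simple factors. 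That is exactly Remark~\ref{rem:rigid}.

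Two caveats worth recording. First, Lemma~\ref{lem:second} is stated only when $L=C_{G_N}(g_s)^\circ$ is itself a standard Levi; you replace this with $L=C_G\bigl(Z(C_G(g_s)^\circ)^\circ\bigr)$, which is always a Levi, but then your later assertion ``because $g_s$ is central in $L$'' is not automatic --- it holds when $C_G(g_s)^\circ$ is a Levi, but not in general (pseudo-Levi centralizers occur, e.g. in $E_8$). You do gesture at this with the ``isolated semisimple part'' remark, but a fully written reduction would have to handle that mixed case explicitly. Second, and more fundamentally, the terminal step you single out --- producing a module of dimension $\ell^{\frac12\dim\calO}$ for each rigid class $\calO$ in a simple group --- is genuinely open, and is exactly where the paper stops as well. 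So your proposal is an accurate reconstruction of the reductions underlying the conjecture rather than a proof of it; the ``gap'' you name is not a flaw in your argument but the open problem the conjecture encodes.
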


Conjecture \ref{conj:small} holds for  $\mathfrak{sl}_{n+1}$, Theorem \ref{cor:sln}. Evidence for this conjecture is also given by Lemmas \ref{lem:second} and \ref{lem:first}. 
Remark \ref{rem:rigid} shows that it is enough to consider rigid orbits.
The bound $b(\g)$ could be relaxed for specific $\eta$'s, 
see Proposition \ref{prop:central}, but seems to be necessary 
if we wish to have a general statement for $\g$.

%\subsection{Acknowledgements}
%This work is partially supported by Grant CPDA125818/12 of Padova University and MIUR PRIN $2012KNL88Y\_004$.
%I.I.S. was supported by ...

\end{document}